\documentclass[]{article}

\usepackage[utf8]{inputenc}
\usepackage{amsmath, amssymb, amsthm} 
\usepackage{graphicx}                 
\usepackage{hyperref}                 
\usepackage{geometry}                 
\usepackage{tikz-cd}
\usepackage{enumitem}
\usepackage[style=numeric, sorting=none]{biblatex}
\usepackage{float}
\usepackage{subcaption}
\newtheorem{theorem}{Theorem}
\newtheorem{corollary}{Corollary}
\newtheorem{proposition}{Proposition}
\newtheorem{lemma}{Lemma}
\theoremstyle{definition}
\newtheorem{remark}{Remark}
\newtheorem{example}{Example}
\newtheorem{definition}{Definition}
\DeclareMathOperator{\ima}{Im}
\DeclareMathOperator{\lcm}{lcm}

\title{\textbf{Functorial Analysis of Group-Alphabet Cellular Automata}}

\author{
  Nathan Dion Larivière\thanks{Email: ndion23@ubishops.ca} $^{,1}$, 
  François Huard\thanks{Email: fhuard@ubishops.ca} $^{,1}$ \\[1ex]
  \small $^1$ Bishop's University, 2600 College street, J1M 1Z7, Québec, Canada.
}

\date{\small October 29, 2026}

\begin{document}

\maketitle

\begin{abstract}
    Let $G$ be a group and let $n\in\mathbb N$. We consider one-dimensional cellular automata of length $n$ over the alphabet $G$, with periodic boundary conditions and with local rules $\phi$ given by words. We construct a functor $F$ from $\mathbf{Grp}$, the category of groups (or any similar algebraic structure), to the category of pointed digraphs, and show its faithfulness. We introduce the image category $\mathcal C_{n,\phi}$ of state transition graphs of all such group-alphabet cellular automata, and show $F$ is an equivalence onto this category. Consequently we investigate the translation of categorical structures in $\mathbf{Grp}$ in terms of automata, and give examples of how standard methods of homological algebra may now be applied to automata. Additionally, a graph-theoretical construction of the product in $\mathcal C_{n,\phi}$ is given, together with a formula for cycle lengths and multiplicities in this product. When $G$ is abelian, further structural properties of state transition graphs follow, and we contrast these results with non-abelian counterexamples.
    
    \vspace{1ex}
    
    \noindent \textbf{Keywords:} cellular automata; state transition graph; category theory; group theory.
\end{abstract}

\section{Introduction}
    Cellular automata (CA) were first invented by John von Neumann and Stanislaw Ulam in the 1940s as formal models for self-replication. 30 years later, John Conway introduced his well-known \textit{Game of Life}, which was popularized by Martin Gardner's column in \textit{Scientific American} and showed how complex behavior emerges from simple rules. In the 1980s Stephen Wolfram published a classification of 1D cellular automata, and introduced a naming scheme for "elementary" CA rules. Since then, cellular automata have developed into a broad area of mathematical research, with applications ranging from physical and biological modelling to computation and cryptography.
    
    The simplest example of a cellular automaton is Wolfram's Rule 90 automaton, where we consider a row of $n$ cells, each containing a value from the binary alphabet $\mathbb Z_2$. which evolves in discrete time steps according to the local rule:
    \[
    \phi(x)_i=x_{i-1}+x_{i+1}
    \]
    where $x=(x_1,\dots, x_n)$ is a configuration and $+$ denotes the group operation. In other words, to obtain the value in a cell at a certain time-step, perform the sum modulo 2 of the values in the left and right neighbor cells at the previous time, considering the $n$-th cell as left neighbor of the 1st, and vice-versa. Figure \ref{fig:R90_evolution} shows the successive evolution of a chosen initial configuration of this automaton, with $n=7$.
    \begin{figure}[htbp]
      \centering
      \[
      \begin{array}{c}
        \begin{array}{|c|c|c|c|c|c|c|}
          \hline 0 & 1 & 0 & 0 & 1 & 0 & 0 \\ \hline
        \end{array} \\[0.5em]
        \downarrow \\[0.5em]
        \begin{array}{|c|c|c|c|c|c|c|}
          \hline 1 & 0 & 1 & 1 & 0 & 1 & 0 \\ \hline
        \end{array} \\[0.5em]
        \downarrow \\[0.5em]
        \begin{array}{|c|c|c|c|c|c|c|}
          \hline 0 & 0 & 1 & 1 & 0 & 0 & 0 \\ \hline
        \end{array}
      \end{array}
      \]
      \caption{Two time step evolution of a Rule 90 automaton over $\mathbb Z_2$}
      \label{fig:R90_evolution}
    \end{figure}
    Since the state space is finite (in this case, it has size $2^n$), iterating successively must eventually reach some cycle. Despite the simplicity of the setup, predicting the behavior of such an automaton, for example the cycle lengths which occur, the number of these cycles, and the number of configurations which cannot be reached by evolution (\textit{Garden-of-Eden} (GOE) states), is far from being trivial. Algebraic analysis is performed in the seminal paper \cite{wolfram1984}.\newline
    CA are now studied in much more general forms, notably in \cite{cecchSMM} where the row of length $n$ is replaced with a group $G$ called \textit{lattice}, the binary values are replaced with a set called \textit{alphabet}, and the automaton evolves according to any of a broad class of \textit{local rules}. \newline
    In this paper, we restrict our attention to a class of CA which we call \textit{group-alphabet cellular automata} (GACA), where the alphabet is any group $G$, the lattice is $\mathbb Z_n$ (a row of $n$ cells with periodic boundary conditions), and the evolution map $\phi$ is given by a local rule represented by a formal word. An automaton can be associated with its \textit{state transition graph}, a functional digraph where nodes are configurations, and a directed edge $(x,y)$ exists whenever $x$ evolves to $y$ after one time step. \newline
    Whereas a similar class of automata called \textit{group cellular automata} (GCA) have been studied before, they are different from our class in that they require the evolution map to be a group homomorphism $\phi:G^n \to G^n$. In those cases, linear or algebraic methods can therefore be applied \cite{bardzellevohomomo}\cite{dennunziodyn}. For non-abelian groups, this structure is lost for almost all rules, and considerably less is known about the resulting state transition graphs. \newline
    A key contribution of this paper is the construction, for a fixed $n, \phi$, of a functor  
    \[
    F_{n,\phi}:\mathbf{Grp} \to \mathbf{pfG}\]
    which associates a group $G$ to the state transition graph $F(G)$ of the corresponding automaton. Here $\mathbf{Grp}$ is the category of groups and group homomorphisms and $ \mathbf{pfG}$ is the category of pointed functional digraphs and pointed functional digraph homomorphisms. When restricting the target category to the subcategory $\mathcal{C}_{n, \phi}$ of state transition graphs of $(n,\phi)$-GACA with pointed digraph morphisms of the form $F(f)$ for $f:G \to K$ a group homomorphism, $F$ is an equivalence of categories. This immediately gives rich structure to the category $\mathcal{C}_{n, \phi}$, and we give its important categorical constructions. Notably, we show the graph-theoretic construction of the categorical product in $\mathcal{C}_{n, \phi}$ is the graph tensor product, that is:
    \[
    F(G\times H)\cong F(G)\otimes F(H),
    \]
    where $\otimes$ is the tensor, or categorical, product of graphs. This turns algebraic decompositions of groups into graph decompositions. Since the cyclic structure of tensor products of functional graphs can be described explicitly, this gives formulas for the cycle lengths and multiplicities of $F(G\times H)$ in terms of those of $F(G)$ and $F(H)$.\newline
    Furthermore, we show examples of how one may, using this functorial framework, apply standard techniques of homological algebra to automata, something which to our knowledge has not been done before. For example, we use the fact that the functor preserves short exact sequences to build commutative diagrams of automata and get structural information about semi-direct products, for instance.

\section{Group-Alphabet Cellular Automata}
    Throughout this paper, we let $G$ be a group (not necessarily abelian), and $*$ denote the group operation. We let $e$ be the identity element in $G$. The lattice is a row of length $n$, so that a configuration is an element $x=(x_1,\dots,x_n) \in G^n$. We call \textit{identity node} the node $e^n=(e,\dots,e)$ with the group element in every position. We set periodic boundary conditions, i.e. the lattice may be identified with $\mathbb Z_n$. One may view these automata as ring-shaped configurations evolving down on a cylinder. The global evolution map is given by a local rule defined as follows:
    Throughout this paper, we fix a neighbourhood
\[
I=(j_1,\dots,j_k)\in\mathbb Z_n^k
\]
and a word
\[
w(X_1,\dots,X_k),
\]
which may involve any integer powers of the abstract variables $\{X_1, \dots , X_k\}$.

\bigskip

For example $X_3^2X_1^{-2}X_3X_4$ is a  word in the variables  $\{X_1, X_2, X_3 , X_4\}.$ 
 \begin{definition}[Global evolution map]
 For a group $(G,*)$ define the global evolution map
\[
\phi_w:G^n\to G^n
\]
by
\[
\phi_w(x)_i
=
w(x_{i+j_1},\dots,x_{i+j_k}),
\]
with indices taken modulo $n$, and where the concatenation $x_i^mx_j^n$ is the element $x_i^m*x_j^n$ of $G$.
\end{definition}
    Note that we consider $w$ to be a formal word, in the sense that a rule given by a word $w$ may be applied to any group. For example, the word in the following example may be used in defining a global map for an automaton over any group, when the $x_i$ are taken from it and $*$ considered its group operation. Note also that this class of rules respects the quiescence condition, that is the node $e^n$ always evolves to itself after one time step.
    \begin{definition}
    In this paper, a cellular automaton is considered a triple $(G,n,\phi)$, where $G$ is the alphabet group, $n \in \mathbb N$ the lattice length, and $\phi$ the global evolution map (induced by the word $w$ and $I$).
    \end{definition}
    \begin{example}
    Wolfram's Rule 90 automaton can be defined as follows:
    \[
   (\mathbb Z_2, n, \phi_{90}) \text{ where} \quad n\in \mathbb N, \quad I = (-1,1), \quad w(X_1,X_2)=X_1X_2, 
    \]

   \text{therefore}\quad
   \[
    \phi_{90}(x)_i=w(x_{i-1},x_{i+1})=x_{i-1}+ x_{i+1}.
    \]
    
    Detailed algebraic analysis of this automaton and more is given in the previously cited \cite{wolfram1984}.

     An immediate observation is that if $(G,\cdot)$ is any group and $f:\mathbb Z_2 \to G$ is a group homomorphism, then $f$ commutes with $w$:
     
         $$f(w(x_{i-1},x_i))=f(x_{i-1}+x_{i+1})=f(x_{i-1})\cdot f(x_{i+1})=w(f(x_{i-1}),f(x_{i+1})).$$

\end{example}
\begin{example} 
Consider now the (non abelian) dihedral group $(D_5, \cdot)$, and the following automaton:
 \[
   (D_5, n, \phi) \text{ where} \quad n\in \mathbb N, \quad I = (2,0,-1), \quad w(X_1,X_2,X_3)=X_3X_2X_1^{-3}X_2, 
    \]
    \[
    \text{therefore}\quad
    \phi(x)_i=w(x_{i+2},x_i,x_{i-1})=x_{i-1}\cdot x_i\cdot x_{i+2}^{-3}\cdot x_{i}.
    \]
Note that since $D_5$ is not abelian, distinct orderings of $I$ and $w$ will yield  different automata.  

Again, it is straightforward to verify that if $G$ is any group and $f:D_5\to G$ is a group homomorphism, then $f$ commutes with $w$.  This property clearly holds  regardless of the groups and  evolution map involved, and will play a crucial role in what follows.
    
\end{example}
An important representation of a cellular automaton, and the main object of study of this paper is the state transition graph, which gives a global view of the behavior of an automaton.

\begin{definition} [Pointed functional digraph]
        A \textit{functional digraph} is an oriented graph where every node has an outdegree of exactly one. Equivalently, it is a set $V$ with a function $V \to V$, hence the name functional digraph. A \textit{pointed functional digraph} is a functional digraph with a distinguished vertex, called basepoint. A morphism of pointed functional digraphs is an adjacency-preserving and basepoint-preserving vertex map. 
        Pointed functional graphs and their morphisms form a category, which we denote $\mathbf{pfG}$.
    \end{definition}
    We can now define the main object of this paper:
    \begin{definition} [State transition graph]
        The \textit{state transition graph} (or state transition diagram) of an automaton $(G, n, \phi)$ is the directed graph
        \[ F_{n,\phi}(G)\]
        with vertex set
        \[ V(F_{n,\phi}(G))=G^n\]
        and edge set
        \[ E(F_{n,\phi}(G))=\{(x,\phi(x)): x \in G^n\}\]
    \end{definition}
    Every node has a single outgoing edge, hence $F_{n,\phi}(G)$ is a functional digraph. Moreover, the identity node $e^n$ is distinguished, hence we consider $F_{n,\phi}(G)$ as a pointed functional digraphs with basepoint $e^n$.
    Configurations with no predecessors are called Garden of Eden (GOE) configurations. When $G$ is abelian, the state transition graphs exhibit many nice symmetries, for example the evolution map $\phi:G^n \to G^n$ is a group homomorphism, hence the indegree of all non-GOE nodes is exactly  $\lvert \ker \phi \rvert$ \cite{bardzellevohomomo}. In general however, that does not hold, hence other methods of analysis are required.
    
    \section{Functor construction}
    We now introduce a map $F$ associating a group $G$ to its state transition graph $F_{n,\phi}(G)$ for a given $n, \phi$. When a map on morphisms is added, we show it acts as a functor from the category $\mathbf{Grp}$ of groups to the category $\mathbf{pfG}$ of pointed functional graphs.\newline
    
    We define the mapping \[
    F_{n,\phi}:\mathbf{Grp} \to \mathbf{pfG}\]
    by:
    \begin{enumerate}
        \item To each group $G$, we assign the digraph $F_{n,\phi}(G)$.
        \item To each group homomorphism $f: G \to H$, we assign the map $F_{n,\phi}(f)$ defined by:
        \[ \forall x=(x_1,...,x_n)\in G^n, \quad F_{n,\phi}(f)(x_1, \dots, x_n) = (f(x_1), \dots, f(x_n))\in H^n \]
    \end{enumerate}
    Unless confusion is at risk, we will henceforth simply use $F(G)$ and $F(f)$ in place of $F_{n,\phi}(G)$ and $F_{n,\phi}(f)$. We now verify that $F$ satisfies the functorial axioms:
    \begin{proof}[Verification]\leavevmode
    \begin{enumerate}
    
        \item \textbf{Morphism Validity:} We must check that $F(f)$ is a pointed graph homomorphism. Let $(x,y)$ be an edge in $F(G)$, i.e. $\phi(x) = y$. Because $f$ is a group homomorphism, it commutes with $w$, and for each $i$ we have:
        \[ \phi(F(f)(x))_i = \phi(f(x_1),\dots,f(x_n))_i = w(f(x_{i+j_1}),\dots,f(x_{i+j_k})) = \]
        \[f\left( w(x_{i+j_1},\dots,x_{i+j_k})\right) = f(\phi(x)_i)=F(f)(\phi(x))_i  =F(f)(y)_i \]
        Hence, $\phi(F(f)(x)) = F(f)(y)$, meaning $\big(F(f)(x), F(f)(y)\big)$ is an edge in $F(H)$, so $F(f)$ is a graph homomorphism. Furthermore, $F(f)(e^n)=(f(e))^n=(e')^n$ for $e'$ the identity in $H$, which makes $F(f)$ a pointed graph homomorphism.

        \item \textbf{Identity Preservation:} For $1_G:G\to G, \quad 1_{F(G)}:F(G)\to F(G)$
        \[ F(1_{G})(x) = (1_{G}(x_1), \dots, 1_{G}(x_n)) = (x_1, \dots, x_n) = 1_{F(G)}(x) \]
        
        \item \textbf{Composition Preservation:} For $f:G \to H, \quad g: H \to L, \quad x \in G^n$
        \[ F(g \circ f)(x) = (g \circ f(x_1), \dots, g \circ f(x_n)) =F(g)(f(x_1),\dots, f(x_n))\]
        \[= F(g)\big(F(f)(x)\big) = (F(g) \circ F(f))(x) \]
        
    \end{enumerate}
    \end{proof}
     From (1), we get that $\phi(F(f)(x))=F(f)(\phi(x))$ whenever $f$ is a group morphism.  Note that this functor may clearly be generalized to be acting on any similar algebraic structure, such as rings, algebras, fields, modules. The crucial point is that the morphisms of the structure must commute with the evolution rule, which is obvious in these cases. We now give some basic properties of this functor.
    
    \begin{proposition}
        Let $f:G\to H$ be a group morphism.  Then $F(f)$ is injective (surjective) if and only if $f$ is injective (surjective).
        \begin{proof}
            Assume that $f$ is not injective.  Then there exist $x\neq y\in G$ such that $f(x)=f(y)$.  Hence $F(f)(x^n)= F(f)(y^n)$ and $F(f)$ is not injective.  Now if $f$ is injective, let $x=(x_1, \dots, x_n)$ and $y=(y_1,\dots, y_n)$ be in $G^n$, such that $F(f)(x)=F(f)(y)$, then for all $i$, we have $f(x_i)=g(y_i)$ hence $x_i=y_i$, so $x=y$, and $F(f)$ is injective.
    \smallskip
    
    Next assume that $f$ is surjective and let $(y_1,\dots ,y_n)$ be in $F(H)$.  Then for each $i$ there exists $x_i\in G$ such that $f(x_i)=y_i$.  This implies that $F(f)(x_1,\dots ,x_n)=(y_1, \dots , y_n)$, thus $F(f)$ is surjective.  Conversely, if $F(f)$ is surjective, let $y\in H$. Then $(y,\dots , y) \in F(H)$ and there exists $(x_1,\dots, x_n)\in F(G)$ such that $F(f)(x_1,\dots, x_n)=(y,\dots, y)$.  In particular, $f(x_1)=y$, and $f$ is surjective.

        \end{proof}
    \end{proposition}
    
    Recall that functor between two categories $C$ and $D$ is faithful if for any two maps $f,g:X\to Y$ in $C$, $F(f)=F(g)$ implies $f=g$.
     
    \begin{proposition}
        $F$ is a faithful functor.
    \end{proposition}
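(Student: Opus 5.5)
The plan is to argue directly from the definition of $F$ on morphisms, using constant (diagonal) configurations as in the proof of the previous proposition. Let $f,g:G\to H$ be group homomorphisms with $F(f)=F(g)$ as maps of pointed functional digraphs $F(G)\to F(H)$. We want to show $f(a)=g(a)$ for every $a\in G$.

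Fix $a\in G$ and consider the constant configuration $a^n=(a,\dots,a)\in G^n$, which is a vertex of $F(G)$. By the definition of $F$ on morphisms,
\[
F(f)(a^n)=(f(a),\dots,f(a)), \qquad F(g)(a^n)=(g(a),\dots,g(a)).
\]
Since $F(f)=F(g)$ as vertex maps, these two $n$-tuples in $H^n$ coincide. Comparing first coordinates gives $f(a)=g(a)$. Because $a$ was arbitrary, $f=g$, so $F$ is faithful. (Any configuration whose first coordinate is $a$, for instance $(a,e,\dots,e)$, would work equally well.)

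There is no real obstacle. The only point to state explicitly is that a morphism in $\mathbf{pfG}$ is determined by its underlying vertex map, so equality $F(f)=F(g)$ means pointwise equality on $G^n$. The argument uses no properties of the word $w$ or the neighbourhood $I$, and it carries over unchanged to the other algebraic categories mentioned after the functor construction.
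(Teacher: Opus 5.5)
Your proof is correct and takes essentially the same approach as the paper. The paper evaluates $F(f)=F(g)$ on an arbitrary configuration $x\in G^n$ and compares first coordinates to get $f(x_1)=g(x_1)$ for all $x_1\in G$; your constant configuration $a^n$ is just a particular choice of such an $x$.
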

    \begin{proof}
        Assume $F(f)=F(g)$ for $f,g:G\to H$ two group homomorphisms. Then $F(f)(x)=F(g)(x)$ for all $x \in G^n$, or $(f(x_1),\dots,f(x_n))=(g(x_1),\dots,g(x_n))$, hence in particular $f(x_1)=g(x_1)$ for all $x_1 \in G$, hence $f=g$.
    \end{proof}
    \begin{remark}
        When considering $\mathbf{pfG}$ as the category of all pointed functional digraphs with all pointed graph homomorphisms, $F$ is not full. Indeed, take for example the graphs $F_{3,\phi_{60}}(S_3)$, and $ F_{3,\phi_{60}}(\mathbb Z_3)$ shown below, with $\phi_{60}(x)_i=x_{i-1}*x_i$. It is not hard to construct between them a non-trivial pointed graph homomorphism, yet there exist no non-trivial group homomorphisms $f:S_3 \to \mathbb Z_3$.
    \begin{figure}[H]
        \centering
        \begin{subfigure}[c]{0.48\textwidth}
            \centering
            \includegraphics[width=\linewidth]{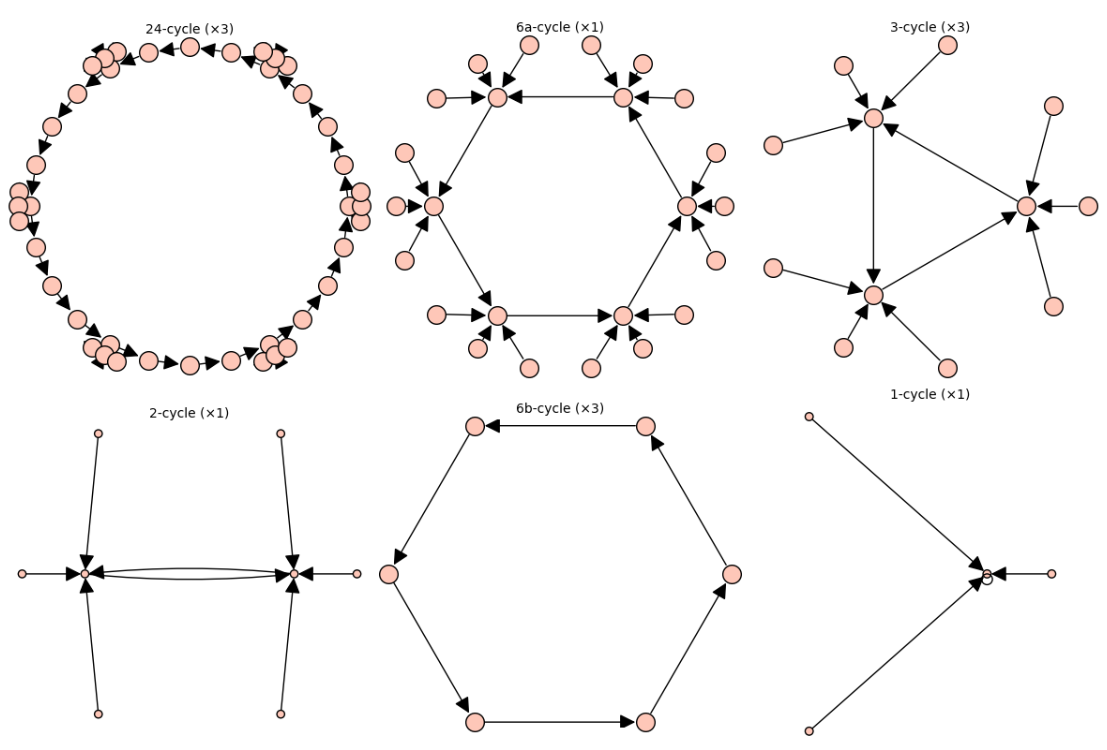}
            \label{fig:placeholder1_1}
        \end{subfigure}
        \hfill 
        \begin{subfigure}[c]{0.48\textwidth}
            \centering
            \includegraphics[width=\linewidth]{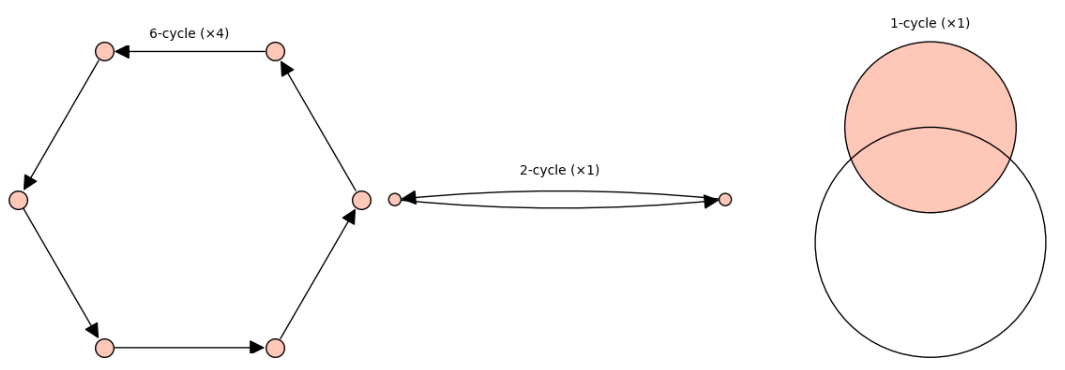}
            \label{fig:placeholder2_1}
        \end{subfigure}
        \caption{The state transition graphs $F_{3,\phi_{60}}(S_3)$ and $F_{3,\phi_{60}}(\mathbb Z_3)$} 
        \label{fig:main_label_1}
    \end{figure}

    \end{remark}

    \begin{remark}
        The strict image of a functor is in general not a subcategory of the target. For instance, let $\mathcal C$ have objects $A,B,C,D$ and non-identity morphisms $A\to B$ and $C\to D$, let $\mathcal D$ have objects $X,Y,Z$ and non-identity morphisms $X\to Y$, $Y\to Z$ and their composite $X\to Z$, and let $F$ send $A,B,C,D$ to $X,Y,Y,Z$. The image contains $X\to Y$ and $Y\to Z$ but not $X\to Z$. The obstruction is that $F$ is not injective on objects. Note that this is different from the so-called essential image or the full subcategory generated by the image, both of which are always subcategories of the target category.
    
        In our setting we adopt the harmless set-theoretic convention that two distinct groups have distinct underlying sets (any group may be replaced by an isomorphic copy, e.g.\ $G\times\{G\}$, to ensure this). Then $G\neq H$ implies $G^n\neq H^n$, hence $F(G)\neq F(H)$, so $F$ is injective on objects. If $F(f_1)$ and $F(f_2)$ are composable in $\mathbf{pfG}$, then $F(\operatorname{cod} f_2)=F(\operatorname{dom} f_1)$, so $\operatorname{cod} f_2=\operatorname{dom} f_1$ and $F(f_1)\circ F(f_2)=F(f_1\circ f_2)$ lies in the image. Thus the image of $F$ is indeed a subcategory of $\mathbf{pfG}$.

        \bigskip
        
    In view of this, if we restrict ourselves to the strict image $\mathbf{F(Grp)}$ of the functor $F$, that is only consider pointed functional digraphs of the form $F(G)$ for some group $G$ with morphisms of the form $F(f)$ for $f$ a group homomorphism, then $F$ becomes full and  surjective on objects (dense), in addition to being faithful as previously proved. Thus $F$ is an equivalence between the categories $\mathbf{F(Grp)}$, which we denote by $\mathbf{\mathcal{C}_{n, \phi}}$, and $\mathbf{Grp}$. Consequently, any purely categorical construction (kernels, cokernels, products, coproducts, pullbacks, etc) that exists in one category automatically exists in the other. The functor also preserves short exact sequences, which enables us, as we will see later,  to use tools from homological algebra to transfer information from groups to automata. For example, a consequence of this is that the category $F(\mathbf{Ab})$ of automata over abelian groups is itself abelian, which are amongst the most understood categories.

    One might consider that the requirement that morphisms between automata be of the form $F(f)$ is a heavy restriction, but in general, the interest in automata lies in the objects rather than the morphisms. We now turn our attention to the construction of the important objects in this category.
\end{remark}
\section{Categorical aspects}
In what follows, the target of the functor $F$ is the subcategory $\mathbf{\mathcal{C}_{n, \phi}}$ of {\bf pfG} formed by the image of the functor $F$.  Since $F$ is an equivalence of categories, we have that all categorical structures in $\mathbf{Grp}$ are preserved in $\mathbf{\mathcal{C}_{n, \phi}}$.

    \subsection{Epis and monos}

    Recall that a morphism $f:G\to H$ is a {\bf monomorphism} if for any pair of morphisms $g_1, g_2:K\to G$, the condition $f\circ g_1= f\circ g_2$ implies $g_1=g_2$.  The notion of epimorphism is obtained by reversing all the arrows.  In the category of groups a  morphism is an epimorphism (monomorphism) if and only if it is  surjective (injective). This is not true in general. For example, in the category of Rings, an epimorphism is not necessarily surjective. Also, note that an equivalence of categories does not necessarily preserve injectivity and surjectivity.   This is true in our case and it implies the following result.
    
    \begin{corollary}
         $F$ preserves epimorphisms and monomorphisms.  Moreover, $F(f)$ is an epimorphism (monomorphism) if and only if $F(f)$ is surjective (injective).
    \end{corollary}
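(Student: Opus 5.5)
The plan is to chain three equivalences. The first is the standard fact that in $\mathbf{Grp}$ a morphism is a monomorphism (epimorphism) exactly when it is injective (surjective). The second is the earlier Proposition: $F(f)$ is injective (surjective) if and only if $f$ is. The third is a purely categorical transfer through $F$, which is an equivalence (indeed faithful and full) between $\mathbf{Grp}$ and $\mathcal C_{n,\phi}$.

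\textbf{Transfer through $F$.} First we show that $f$ is a monomorphism in $\mathbf{Grp}$ if and only if $F(f)$ is a monomorphism in $\mathcal C_{n,\phi}$, and dually for epimorphisms.

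Suppose $f$ is mono. Let $\alpha,\beta:F(K)\to F(G)$ be morphisms in $\mathcal C_{n,\phi}$ with $F(f)\circ\alpha=F(f)\circ\beta$. By fullness we can write $\alpha=F(g_1)$ and $\beta=F(g_2)$. Functoriality then gives $F(f\circ g_1)=F(f\circ g_2)$, and faithfulness gives $f\circ g_1=f\circ g_2$. Hence $g_1=g_2$ and $\alpha=\beta$.

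Conversely, suppose $F(f)$ is mono and $f\circ g_1=f\circ g_2$. Applying $F$ gives $F(f)\circ F(g_1)=F(f)\circ F(g_2)$, so $F(g_1)=F(g_2)$, and faithfulness gives $g_1=g_2$. The epimorphism case is the same argument with the arrows reversed.

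Here we must use that every object of $\mathcal C_{n,\phi}$ is of the form $F(K)$ and every morphism between such objects is of the form $F(g)$. This holds by the definition of the strict image and the injectivity-on-objects convention in the preceding Remark.

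\textbf{Combining.} For a morphism $F(f)$ of $\mathcal C_{n,\phi}$ we get
\[
F(f)\text{ mono}\iff f\text{ mono}\iff f\text{ injective}\iff F(f)\text{ injective},
\]
and the analogous chain holds for epi and surjective. This gives both the preservation statement and the characterization.

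\textbf{Main obstacle.} The only nontrivial input is the group-theoretic fact that epimorphisms in $\mathbf{Grp}$ are surjective, which the paper states as known. If a justification is needed, I would cite the standard argument. Let $H$ be a proper subgroup of $K$. If $[K:H]=2$, then $H$ is normal and the two maps $K\to K/H$ (the quotient and the trivial one) agree on $H$ but differ. Otherwise, use Schreier's trick: take the permutation group of $(K/H)\sqcup\{\infty\}$, where $\infty$ is a new point, and compare the left-multiplication action with its conjugate by the transposition swapping the coset $H$ and $\infty$. These two homomorphisms agree on $H$ but differ on $K$.

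Everything else is routine bookkeeping with fullness and faithfulness.
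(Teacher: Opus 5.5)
Your argument is correct and is essentially the paper's proof. The paper cites that the equivalence $F\colon \mathbf{Grp}\to\mathcal C_{n,\phi}$ preserves and reflects monomorphisms and epimorphisms, then combines this with Proposition 1 and the fact that monos/epis in $\mathbf{Grp}$ are exactly the injective/surjective maps; this is your chain of equivalences, with your fullness/faithfulness computation unpacking the first step and your Schreier-trick sketch supplying the group-theoretic fact the paper takes as known (the index-$2$ case split is unnecessary, since the construction on $(K/H)\sqcup\{\infty\}$ already separates the two homomorphisms for every proper subgroup $H$).
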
 

        \begin{proof}
            The first part follows from the fact that $F$ is an equivalence of categories.  The second part then follows from Proposition 1. 

        \end{proof}
        
\subsection{Zero object}
    In a category, a {\bf zero object} is an object that is both initial and terminal, that is an object from which a unique morphism exists to any object, and to which a unique morphism exists from any object. In the category of groups, there is a zero object, namely the trivial group $\{e\}$. It follows that in $\mathbf{\mathcal{C}_{n, \phi}}$, the image of the trivial group, $F(\{e\})$, is also a zero object. This is a graph consisting of the single node $(e,\dots, e)$ with a self loop. 
    
    Indeed, one can easily see that the initial morphism can be constructed from $e^n$ to any automaton $F(G)$ by mapping basepoint to basepoint.  Note that this morphism is unique in the target category.  Indeed, if the graph $e^n$ maps to another node, then this node must also be a loop, and if this loop is not $e^n$, then the map cannot be of the form $F(f)$ for $f:\{e\} \to G$ a group morphism since $f(e)=e_G$.  For similar reasons, the morphism from any automaton $F(G)$ to $e^n$ that maps every node to the basepoint is unique in the target category.  These initial and terminal morphisms are often referred to as zero morphisms, since they both map all the elements to the identity.

    Note that this differs from the broader category of graphs in which the initial object is the empty graph, and no terminal object exists.

    \subsection{Kernels}
    Recall that if $f:G\to H$ is a  morphism, then a morphism $k:K\to G$ is a {\bf kernel} of $f$ if 
    \begin{itemize}
    \item[(i)] $f\circ k$ is the zero morphism, and
    \item[(ii)] Given any morphism $k':K'\to G$ such that $f\circ k'$ is the zero morphism, there is a unique morphism $u:K'\to K$ such that $k\circ u=k'$. 
    \end{itemize}

    In the category of groups, the categorical kernel of $f$ coincides with the standard algebraic kernel of $f$, that is $K=\ker f=\{x\in G : f(x)=e_H\}$ together with the inclusion map $i:\ker f \to G$.  Since $F$ is an equivalence of categories, it maps kernels to kernels, hence we immediately get that $\mathbf{\mathcal{C}_{n, \phi}}$ has kernels and that 
    
    \begin{corollary}
        The kernel of the map $F(f):F(G)\to F(H)$ is given by $F(i): F(\ker f)\to F(G).$
    \end{corollary}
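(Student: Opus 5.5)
The plan is to verify the two defining conditions of a kernel directly in $\mathcal{C}_{n,\phi}$ for the pair $(F(\ker f), F(i))$. I will not rely only on the abstract claim that an equivalence preserves kernels. The first step is to identify the zero morphisms in $\mathcal{C}_{n,\phi}$. By the previous subsection, the zero object is $F(\{e\})$, and the zero morphism $F(G)\to F(H)$ is the composite through it, namely the map sending every node to the basepoint $e_H^n$. This map equals $F(z)$, where $z:G\to H$ is the trivial homomorphism.

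Condition (i) is then immediate from functoriality. We have $F(f)\circ F(i)=F(f\circ i)=F(z)$, because $f\circ i$ is the trivial homomorphism $\ker f\to H$. Concretely, for $x\in(\ker f)^n$ each coordinate $f(x_j)$ equals $e_H$, so $F(f)(F(i)(x))=e_H^n$.

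For condition (ii), every morphism of $\mathcal{C}_{n,\phi}$ with codomain $F(G)$ has the form $k'=F(g)$ with $g:K'\to G$ a group homomorphism; this holds because the category is the strict image of $F$ and $F$ is injective on objects. Suppose $F(f)\circ F(g)=F(f\circ g)$ is the zero morphism $F(z)$. Faithfulness (Proposition 2) gives $f\circ g=z$, so $g(K')\subseteq\ker f$. Hence $g$ factors uniquely as $g=i\circ u$ for a homomorphism $u:K'\to\ker f$. Applying $F$ yields $F(i)\circ F(u)=F(g)=k'$, which gives existence of the factorization.

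For uniqueness, suppose $F(i)\circ F(u')=F(g)$ for some morphism $F(u')$ of $\mathcal{C}_{n,\phi}$. Then $F(i)$ is injective by Proposition 1, since $i$ is injective, so $F(u')=F(u)$. Alternatively, faithfulness gives $i\circ u'=i\circ u$ and hence $u'=u$.

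The only step needing care is the identification of the zero morphism as $F(z)$. Without it, the hypothesis ``$F(f)\circ k'$ is zero'' could not be pulled back to $f\circ g=z$ via faithfulness. I would settle this by the pointwise check that $F(z)$ sends everything to $e_H^n$, and that it is the unique morphism of $\mathcal{C}_{n,\phi}$ factoring through $F(\{e\})$ as in the zero-object discussion. Everything else is routine.
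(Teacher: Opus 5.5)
Your proposal is correct and follows the same route as the paper's direct verification: functoriality gives $F(f)\circ F(i)=0$, faithfulness pulls $F(f)\circ F(k')=0$ back to $f\circ k'=0$ in $\mathbf{Grp}$, and $F$ then carries the universal property of $\ker f$ in $\mathbf{Grp}$ over to $\mathcal{C}_{n,\phi}$. Three of your steps make explicit what the paper leaves implicit: identifying the zero morphism with $F(z)$, using injectivity on objects to write $k'=F(g)$, and checking uniqueness via injectivity of $F(i)$.
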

          It follows from above and from the definition of the functor $F$ that $F(\ker f)$ is the graph with vertex set $(\ker f)^n$ and edges given by the evolution map $\phi$. Note that this agrees with the usual notion of kernel of a pointed graph morphism, which is defined to be the preimage of the basepoint $e^n$. Indeed, we have $\{x\in G^n : F(f)(x)=e_H^n\}=\{(x_1,...,x_n)\in G^n : f(x_i)=e_H \text{ for all i}\} = \{(x_1,...,x_n):x_i\in \ker f \text{ for all i}\}=(\ker f)^n$.\newline

         \smallskip
         
         As a sample, we provide the proof that $F(i): F(\ker f)\to F(G)$ satisfies the universal property of a Kernel in $F(\mathbf{Grp})$.  This is not necessary since we have an equivalence of categories, but it shows the typical gymnastic of basic category theory:
         \begin{proof}
             Take a morphism 
                \begin{tikzcd}
                F(G) \arrow[r, "F(f)"] & F(H)
                \end{tikzcd} 
            in $\mathbf{\mathcal{C}_{n, \phi}}$, then from the inclusion map $i:\ker f \to G$,  we have the morphism $F(i):F(\ker f) \to F(G)$. Now for all $x \in F(\ker f), \,\, x_j \in \ker f$ and so $F(f)\circ F(i)(x)=F(f)(x)=(f(x_1), \dots, f(x_n))=(e,\dots, e) \implies F(f)\circ F(i)=0$. Now assume there exists another object $F(K')$ and a morphism $F(k'):F(K') \to F(G)$ such that $F(f)\circ F(k')=0$. Then $F(f\circ k')=0=F(0)$ so by faithfulness of $F$,  $ f\circ k'=0$, and by the universal property of $\ker f$ in $\mathbf{Grp}$, there exists a unique morphism $u:K' \to \ker f$ such that $i \circ u=k'$. It follows (again by faithfulness) that there exists a unique morphism $F(u):F(K') \to F(\ker f)$ such that $F(i)\circ F(u)=F(k')$.
        \end{proof}
                \begin{center}
                \begin{tikzcd}
                    \ker f \arrow[r, "i"]                                & G \arrow[r, "f"] & H \\
                    K' \arrow[ru, "k'"'] \arrow[u, "\exists! u", dashed] &                  &  
                    \end{tikzcd} 
                        \begin{tikzcd}
                        {} \arrow[r, "F", Rightarrow] & {}
                        \end{tikzcd}
                    \begin{tikzcd}
                    F(\ker f) \arrow[r, "F(i)"]                                   & F(G) \arrow[r, "F(f)"] & F(H) \\
                    F(K') \arrow[ru, "F(k')"'] \arrow[u, "\exists! F(u)", dashed] &                        &     
                    \end{tikzcd}
                \end{center}

    \begin{example}
        Consider the parity morphism $\delta:S_3 \to \mathbb Z_2$, whose kernel is $\ker \delta = A_3$, and its image in $\mathcal C_{3, \phi{60}}$ $F(\delta):F(S_3) \to F(\mathbb Z_2)$ with $\ker F(\delta)=F(A_3)$ as shown below.
     
\begin{figure}[H]
    \centering
    \begin{subfigure}[c]{0.48\textwidth}
        \centering
        \includegraphics[width=\linewidth]{fig_s3_r60_n3.png}
        \caption{The state transition graph $F_{3,\phi_{60}}(S_3)=\operatorname{dom}F(\delta)$}
        \label{fig:placeholder1_2}
    \end{subfigure}%
    \hfill
    \begin{subfigure}[c]{0.48\textwidth}
        \centering
        \includegraphics[width=\linewidth]{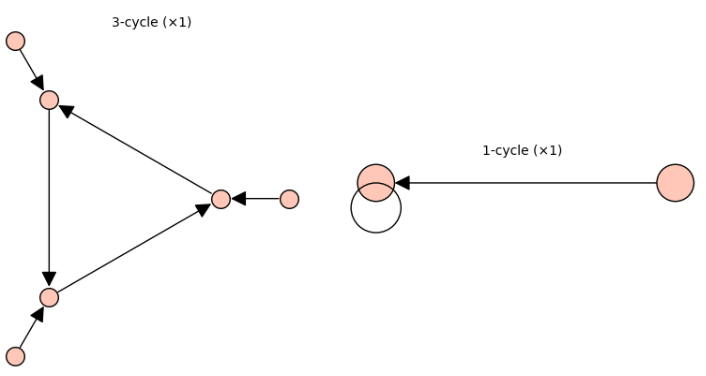}
        \caption{The state transition graph $F_{3,\phi_{60}}(\mathbb{Z}_2)=\operatorname{codom}F(\delta)$}
        \label{fig:placeholder2_2}
    \end{subfigure}

    \vspace{1em} 

    \begin{subfigure}[c]{0.6\textwidth}
        \centering
        \includegraphics[width=\linewidth]{fig_z3_r60_n3.png}
        \caption{The state transition graph $F_{3,\phi_{60}}(A_3)=\ker F(\delta)$}
        \label{fig:placeholder3_2}
    \end{subfigure}

    \caption{Example of a kernel graph.} 
    \label{fig:main_label_2}
\end{figure}
One can readily see the inclusion of the graph in (c) into the graph of (a). Furthermore, we automatically know that nodes in the cycles of $F_{3,\phi_{60}}(S_3)$ which do not appear in the kernel must contain odd permutations, else they would have been sent to the node $e_{\mathbb Z_2}^n$ and would appear in the kernel graph.
\end{example}
    \subsection{Cokernels}
The cokernel is the dual notion of the kernel.  If $f:G\to H$ is a morphism, then a morphism $q:H\to Q$ is a {\bf cokernel} of $f$ if 
    \begin{itemize}
    \item[(i)] $q\circ f$ is the zero morphism, and
    \item[(ii)] Given any morphism $q':H\to K'$ such that $q'\circ f$ is the zero morphism, there is a unique morphism $u:K\to K'$ such that $u\circ q=q'$. 
    \end{itemize}

     In the category of groups, the categorical cokernel of $f$ always exists and is the morphism $q:H\to H/N$  which maps $h$ to $hN$, where $N$ is the normal closure of $f(G)$, that is the smallest normal subgroup of $H$ containing $f(G)$. It then follows that $F(q):F(H)\to F(H/N)=(H/N)^n = \{(x_1N,\dots, x_nN) : x_i\in H\}$ is a cokernel in $\mathcal{C}_{n, \phi}$.  The object $F(H/N)$ is given by the following quotient graph:
    
        \begin{definition}[Quotient graph]
            Given $N$ a normal subgroup of a group $H$, we define the quotient graph $F(H)/\sim_N$ by identifying the vertices of $F(H)$ together according to the following equivalence relation:
            \[
            (x_1,...,x_n)\sim_N(y_1,...,y_n) \iff x_iN=y_iN \text{ for each } i \in \{1,...,n\}
            \]
            One can readily check that $\sim_N$ satisfies the axioms of an equivalence relation.
            
            We denote by $ [x_1,...,x_n] $ the equivalence class of $(x_1,\dots,x_n)$ under this relation.  The equivalence classes of $F(H)/\sim_N$ are the vertices of the quotient graph.  Moreover there is an edge from $ [x_1, \dots,x_n] $  to $[y_1,\dots , y_n]$ if $\phi(x_1,\dots, x_n)=(y_1,\dots, y_n)$.  It is then straightforward to check that this is well defined and that the graphs $F(H/N)$ and $F(H)/\sim_N$ are isomorphic.
            
        \end{definition}


        \begin{example}
           Consider now the inclusion morphism $f:\mathbb Z_4   \to D_4$ given by $n \mapsto \rho^n $ for each $n \in \mathbb Z_4$ (with $\rho$ the rotation generator in $D_4$), whose cokernel is $\operatorname{coker}(f)=D_4/\langle\rho\rangle \cong \mathbb Z_2$. ($\mathbb Z_4$ is isomorphic to $ \langle \rho \rangle$ which is normal in $ D_4$). We give the images in $\mathcal C_{5,\phi_{90}}$ of these three groups below.
           \begin{figure}[H]
    \centering
    \begin{subfigure}[c]{0.48\textwidth}
        \centering
        \includegraphics[width=\linewidth]{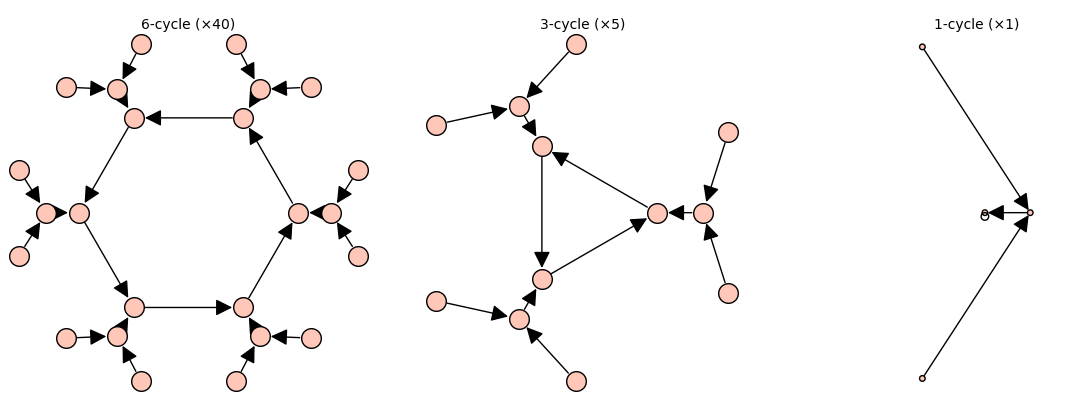}
        \caption{The state transition graph $F_{5,\phi_{90}}(\mathbb Z_4)=\operatorname{dom}F(f)$}
        \label{fig:placeholder1_3}
    \end{subfigure}%
    \hfill
    \begin{subfigure}[c]{0.48\textwidth}
        \centering
        \includegraphics[width=\linewidth]{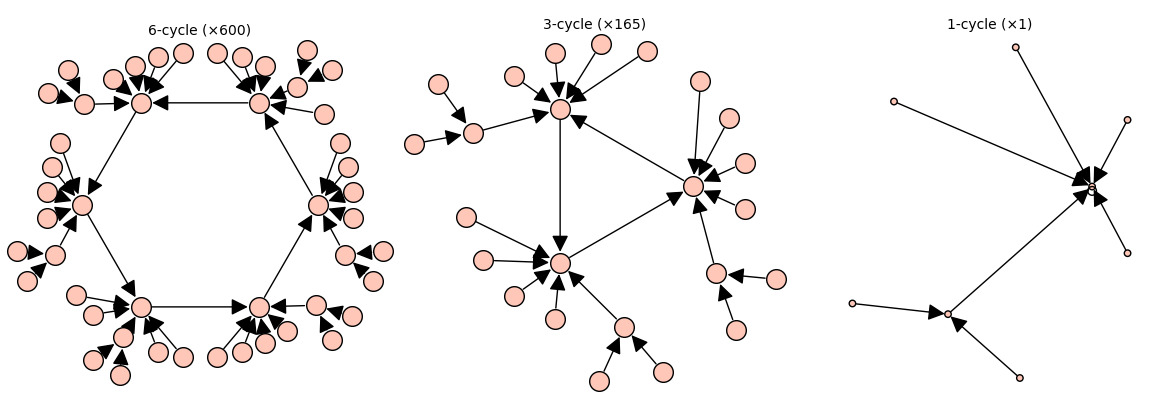}
        \caption{The state transition graph $F_{5,\phi_{90}}(D_4)=\operatorname{codom}F(f)$}
        \label{fig:placeholder2_3}
    \end{subfigure}

    \vspace{1em} 

    \begin{subfigure}[c]{0.6\textwidth}
        \centering
        \includegraphics[width=\linewidth]{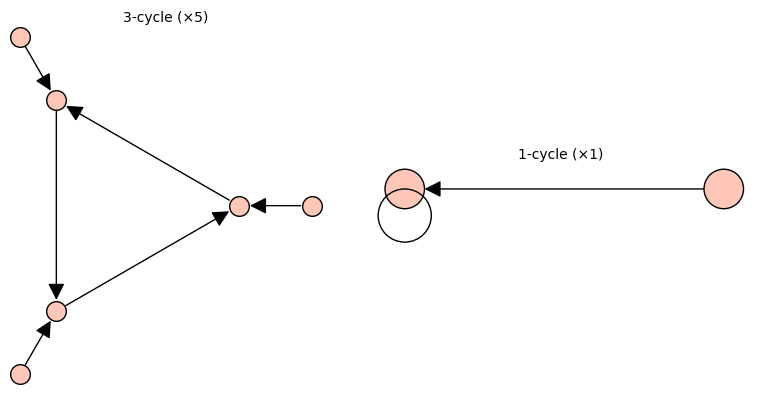}
        \caption{The state transition graph $F_{5,\phi_{90}}(\mathbb Z_2)=\operatorname{coker} F(f)$}
        \label{fig:placeholder3_3}
    \end{subfigure}

    \caption{Example of a cokernel graph.} 
    \label{fig:main_label_3}
\end{figure}

        \end{example}

    \subsection{Products}

The \textbf{categorical product} of a family of objects $\{G_i\}_{i \in I}$ is an object $P$ together with a family of  morphisms $\{\pi_i:P\to G_i\}_{i\in I}$ that satisfy the following universal property: given a family of morphisms $\{f_i: H \to G_i\}_{i\in I}$, there exists a unique morphism $f: H \to  P$ such that the following diagram commutes for all $i \in I$:

            \begin{center}
                \begin{tikzcd}
                    P\arrow[r, "\pi_i"]                                & G_i \\
                    H \arrow[ru, "f_i"'] \arrow[u, "\exists! f", dashed] &                  
                    \end{tikzcd} 
                   
                \end{center}

  The product in the category of groups exists and is the standard direct product $\prod_{i \in I} G_i$, together with the projections $\pi_i : \prod_{i \in I} G_i\to G_i$.  It consists of all Cartesian tuples or sequences $(g_i)_{i \in I}$ where $g_i \in G_i$ for each $i \in I$, under component-wise multiplication. 

    \begin{corollary}
        The product of a family $\{F(G_i)\}_{i\in I}$  in $\mathcal{C}_{n, \phi}$ exists and is $F(\prod_{i\in I} G_i)$, together with the morphisms $F(\pi_i) : F(\prod_{i \in I} G_i)\to F(G_i)$.
    \end{corollary}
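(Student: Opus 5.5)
The plan is to transport the universal property of the direct product in $\mathbf{Grp}$ across $F$, in the same way the kernel statement was verified above. Work in $\mathcal{C}_{n,\phi}$, whose objects are the graphs $F(G)$ and whose morphisms are the maps $F(f)$. Let $P=\prod_{i\in I}G_i$ with projections $\pi_i$. First, $F(P)$ is an object of $\mathcal{C}_{n,\phi}$, and each $F(\pi_i):F(P)\to F(G_i)$ is a morphism there. It remains to check the universal property against every cone in $\mathcal{C}_{n,\phi}$.

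For existence, take an object of $\mathcal{C}_{n,\phi}$, which by definition of the strict image has the form $F(H)$, together with morphisms $g_i:F(H)\to F(G_i)$. Each $g_i$ equals $F(f_i)$ for some group homomorphism $f_i:H\to G_i$. By the earlier remark that $F$ is injective on objects, the domains and codomains of the $f_i$ are forced to be $H$ and $G_i$. By faithfulness (Proposition 2), $f_i$ is uniquely determined by $g_i$. The universal property of $P$ in $\mathbf{Grp}$ then gives a unique $f:H\to P$ with $\pi_i\circ f=f_i$ for all $i$. Applying $F$ yields $F(\pi_i)\circ F(f)=F(\pi_i\circ f)=F(f_i)=g_i$, so $F(f)$ is a mediating morphism.

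For uniqueness, suppose some morphism $F(f')$ with $f':H\to P$ also satisfies $F(\pi_i)\circ F(f')=g_i$ for every $i$. Then $F(\pi_i\circ f')=F(f_i)$, so faithfulness gives $\pi_i\circ f'=f_i$ for all $i$. Uniqueness in $\mathbf{Grp}$ then forces $f'=f$, hence $F(f')=F(f)$.

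The only delicate point is ensuring that every morphism of $\mathcal{C}_{n,\phi}$ comes from a group homomorphism between exactly the expected groups. This is where the set-theoretic convention (injectivity of $F$ on objects) and the definition of $\mathcal{C}_{n,\phi}$ as the strict image are used. Alternatively, one can simply invoke that $F$ is an equivalence onto $\mathcal{C}_{n,\phi}$, and equivalences preserve limits. As a sanity check, the identification $\left(\prod_i G_i\right)^n\cong\prod_i G_i^n$ shows concretely that $\phi$ acts componentwise on $F(P)$, since $w$ is evaluated coordinatewise in a direct product. This anticipates the tensor product description $F(G\times H)\cong F(G)\otimes F(H)$.
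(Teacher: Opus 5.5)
Your proposal is correct and takes essentially the paper's route. The paper obtains the corollary directly from $F$ being an equivalence onto $\mathcal{C}_{n,\phi}$, and your explicit transport of the universal property via faithfulness is the same argument the paper writes out as a sample for kernels, with the added care of using injectivity on objects to pin down the domains and codomains of the underlying homomorphisms.
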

    Products of cellular automata, as studied in \cite{castillocatprod}, \cite[pp. 122066]{dennunziodyn}, are relatively well known. In the broader category of graphs and graph morphisms, the categorical product is the tensor product, which we recall.
        \begin{definition}[\cite{hammackhandbook} Tensor product of graphs]
            The tensor product of two graphs $\Gamma_1$ and $\Gamma_2$ is the graph, denoted in this paper as $\Gamma_1 \otimes \Gamma_2$, with the following vertex and edge sets. 
            \[V(\Gamma_1 \otimes \Gamma_2)=V(\Gamma_1)\times V(\Gamma_2)\]
            \[E(\Gamma_1 \otimes \Gamma_2)=\Big\{\big((g,h),(g',h')\big):(g,g')\in E(\Gamma_1) \text{ and } (h,h') \in E(\Gamma_2)\Big\}\]
            (This product is also known as the Kronecker product, direct product, or categorical product).
        \end{definition}
    As it turns out, this operation is also the product in our category $\mathbf{\mathcal{C}_{n, \phi}}$.
        \begin{theorem}
            For any two groups $G_1$ and $G_2$, we have that $F(G_1 \prod G_2) \cong F(G_1) \otimes F(G_2)$. In other words, the categorical product in $\mathbf{\mathcal{C}_{n, \phi}}$ is the graph tensor product.
        \end{theorem}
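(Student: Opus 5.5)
I will prove the isomorphism directly by writing down an explicit bijection of vertex sets and checking that it preserves edges in both directions and fixes the basepoint. The vertex set of $F(G_1\times G_2)$ is $(G_1\times G_2)^n$, whose elements are $n$-tuples of pairs $((a_1,b_1),\dots,(a_n,b_n))$. The vertex set of $F(G_1)\otimes F(G_2)$ is $G_1^n\times G_2^n$. I define
\[
\Psi\big((a_1,b_1),\dots,(a_n,b_n)\big)=\big((a_1,\dots,a_n),(b_1,\dots,b_n)\big).
\]
This is the usual bijection that unzips a tuple of pairs into a pair of tuples. It also equals $(F(\pi_1),F(\pi_2))$, the pairing of the images of the two projections, which ties the construction to Corollary 3. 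The basepoint $(e,e)^n$ is sent to $(e_{G_1}^n,e_{G_2}^n)$. I take this pair to be the basepoint of the tensor product, declaring the product of pointed graphs to be pointed at the pair of basepoints.

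The key step is to show that the evolution map commutes with $\Psi$:
\[
\Psi\big(\phi(z)\big)=\big(\phi(F(\pi_1)(z)),\,\phi(F(\pi_2)(z))\big).
\]
This follows from the identity $\phi(F(f)(x))=F(f)(\phi(x))$, established in the functoriality verification, applied to the projection homomorphisms $\pi_1$ and $\pi_2$. It can also be seen directly: multiplication in $G_1\times G_2$ is componentwise, so evaluating the word $w$ on pairs gives
\[
w\big((a_{i+j_1},b_{i+j_1}),\dots\big)=\big(w(a_{i+j_1},\dots),\,w(b_{i+j_1},\dots)\big),
\]
including for negative powers, since inverses are also componentwise.

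Given this identity, edges correspond in both directions. Suppose $(z,\phi(z))$ is an edge of $F(G_1\times G_2)$. Then $\Psi(z)=(x,y)$ and $\Psi(\phi(z))=(\phi(x),\phi(y))$. Both $(x,\phi(x))$ and $(y,\phi(y))$ are edges in their respective factors, so this is an edge of the tensor product. Conversely, the tensor product is itself a functional digraph, since each vertex $(x,y)$ has the unique out-neighbour $(\phi(x),\phi(y))$. So $\Psi^{-1}$ sends its unique out-edge from each vertex to the unique out-edge from the preimage. Hence $\Psi$ is an isomorphism of pointed functional digraphs.

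For the second sentence of the theorem, Corollary 3 identifies $F(G_1\times G_2)$ with the projections $F(\pi_i)$ as the product in $\mathcal C_{n,\phi}$. Under $\Psi$, these projections become exactly the coordinate projections of $F(G_1)\otimes F(G_2)$. Therefore the tensor product, with its coordinate projections, is the product in $\mathcal C_{n,\phi}$.

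I expect no real obstacle. The only point needing care is that the word $w$ is evaluated componentwise in a direct product even when the groups are non-abelian. This holds because the order of the letters in $w$ is the same in both coordinates. The result then extends to finite families by induction, and to arbitrary families by the same unzipping map.
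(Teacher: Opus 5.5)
Your proposal is correct and is essentially the paper's proof: the same unzipping bijection on vertices, with the edge correspondence coming from the fact that the word $w$ is evaluated componentwise in $G_1\times G_2$ (or, as you also note, from $\phi\circ F(\pi_i)=F(\pi_i)\circ\phi$). You also handle points the paper leaves implicit: the basepoint, the converse edge direction via functionality of the tensor product, and the identification of $F(\pi_1),F(\pi_2)$ with the coordinate projections; as in the paper, the second sentence of the theorem should be read up to this isomorphism.
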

        \begin{proof}
            We construct the canonical vertex isomorphism $\varphi_V:(G_1 \prod G_2)^n \to G_1^n\times G_2^n$ defined by \newline
            $\varphi\big((a_1,b_1),\dots,(a_n,b_n)\big)=\big((a_1,\dots,a_n),(b_1,\dots,b_n)\big)$. Now observe that 
            
            \[
            (u,v)=\Big[\big((a_1,b_1),\dots,(a_n,b_n)\big),\big((c_1,d_1),\dots,(c_n,d_n)\big)\Big]
            \]
            is an edge in $F(G_1 \prod G_2)$ if and only if $(c_i,d_i) = w\big((a_{i+j_1},b_{i+j_1}),\dots,(a_{i+j_k},b_{i+j_k})\big) =\newline
        (w(a_{i+j_1},\dots,a_{i+j_k}), w(b_{i+j_1},\dots,b_{i+j_k}))$ $\iff$ $c_i =w(a_{i+j_1},\dots,a_{i+j_k})$ and $d_i = w(b_{i+j_1},\dots,b_{i+j_k})$ $\iff$ $(a,c)$ is an edge in $F(G_1)$ and $(b,d)$ is an edge in $F(G_2)$ $\iff$ $\big((a,b),(c,d)\big)$ is an edge in $F(G_1)\otimes F(G_2)$.
        \end{proof}

        As a consequence of this result, we obtain the construction of the categorical product of any finite family of objects $F(G_i)$ in $\mathbf{\mathcal{C}_{n, \phi}}$.

        \begin{remark}
            It is a known fact \cite{roylealgebraicgraph} that the adjacency matrix   of the graph $\Gamma_1 \otimes \Gamma_2$ is given by the Kronecker product of the adjacency matrices of the factor graphs. That is:
            \[
            A(\Gamma_1 \otimes \Gamma_2)=A(\Gamma_1)\otimes_{Kronecker}A(\Gamma_2)
            \]
            Where $A(\Gamma)$ is the adjacency matrix of the graph $\Gamma$.
        \end{remark}

    \subsection{Coproduct}

    The categorical coproduct of a family of objects $\{G_i\}_{i\in I}$ is obtained by reversing all the arrows in the definition of the product, and is denoted by $\{q_j: G_j\to\oplus_{i\in I}G_i\}_{j\in I}$.  In the category of groups, the categorical coproduct of $\{G_i\}_{i \in I}$ exists and is their free product, that is the group whose elements are finite words $g_1g_2\dots g_m$ where each $g_k\in G_{i_k}$ and adjacent indices $i_k, i_{k+1}$ are distinct, and whose operation is the concatenation.
    \begin{corollary}
        The categorical coproduct of a family $\{F(G_i)\}_{i\in I}$  in $\mathcal{C}_{n, \phi}$ exists and is $F(\oplus_{i\in I} G_i)$, together with the morphisms $F(q_j) : F(G_j)\to F(\oplus_{i \in I}G_i$).
    \end{corollary}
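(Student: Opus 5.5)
The plan is to reduce the statement to the coproduct in $\mathbf{Grp}$, exactly as for the kernel and product corollaries. In $\mathbf{Grp}$ the coproduct of $\{G_i\}_{i\in I}$ is the free product $\oplus_{i\in I}G_i$ with the canonical inclusions $q_j$. Since $F:\mathbf{Grp}\to\mathcal C_{n,\phi}$ is an equivalence (full, faithful, surjective on objects by the earlier remark), it preserves all colimits that exist. So $F(\oplus_{i}G_i)$ with the maps $F(q_j)$ is a coproduct in $\mathcal C_{n,\phi}$. To keep the argument self-contained, I would still verify the universal property directly, in the style of the kernel proof.

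First, take an object $F(H)$ of $\mathcal C_{n,\phi}$ and a family of morphisms $\{g_j:F(G_j)\to F(H)\}_{j\in I}$ in $\mathcal C_{n,\phi}$. By the definition of the strict image, each $g_j$ has the form $F(f_j)$ for some group homomorphism $f_j:G_j\to H$. Second, the universal property of the free product gives a unique homomorphism $u:\oplus_i G_i\to H$ with $u\circ q_j=f_j$ for all $j$. Applying functoriality yields $F(u)\circ F(q_j)=F(u\circ q_j)=F(f_j)$, so $F(u)$ is a mediating morphism.

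Third, for uniqueness, suppose $v:F(\oplus_iG_i)\to F(H)$ in $\mathcal C_{n,\phi}$ also satisfies $v\circ F(q_j)=F(f_j)$ for all $j$. Again $v=F(u')$ for some homomorphism $u'$. Then $F(u'\circ q_j)=F(f_j)$, and faithfulness (Proposition 2) gives $u'\circ q_j=f_j$ for every $j$. Uniqueness in $\mathbf{Grp}$ then gives $u'=u$, hence $v=F(u)$.

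The only delicate point is the first step: every morphism in $\mathcal C_{n,\phi}$ must genuinely be of the form $F(f)$. This is why the target has to be the strict image $\mathcal C_{n,\phi}$, which the earlier remark shows is a subcategory using the convention that distinct groups have distinct underlying sets. Inside the larger category $\mathbf{pfG}$ the claim would fail, since $F$ is not full there. I would also note that $F(\oplus_iG_i)$ is the state transition graph on $(\oplus_iG_i)^n$. Unlike the product, this graph is not a simple graph-theoretic combination of the $F(G_i)$, such as a disjoint union or a wedge at the basepoints, so no explicit graph construction is claimed.
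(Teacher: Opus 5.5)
Your proposal is correct and takes the same route as the paper, which obtains this corollary from the equivalence $F:\mathbf{Grp}\to\mathcal C_{n,\phi}$ applied to the free product. Your explicit check of the universal property, via fullness onto the strict image, functoriality and faithfulness, mirrors the paper's sample verification for kernels.
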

        In the general category of pointed graphs, the coproduct (or sum) is normally defined as the wedge sum, which is the disjoint union with basepoints identified together. For two finite groups $G,K$, the wedge sum of $F(G)$ and  $F(H)$ is therefore also finite. However, in $\mathbf{\mathcal{C}_{n, \phi}}$, the coproduct $F(G)\oplus F(H)$ is, as we saw above, necessarily infinite (unless  both $G$ and $K$ are trivial). Therefore the coproduct cannot be the wedge sum. It would be interesting to find the graph operation which plays the role of the categorical coproduct in $\mathbf{\mathcal{C}_{n, \phi}}$.
    
    \subsection{Image}

    The categorical image of a morphism $f:G\to H$ in a category is a monomorphism $m:I\to H$ satisfying the following universal property:

      \begin{itemize}
    \item[(i)] There exists a morphism $e:G\to I$ such that $f=me$, and
    \item[(ii)] For any object $I'$ with a morphism $e':G\to I'$ and a monomorphism $m':I'\to Y$ such that $f=m'e'$, there exists a unique morphism $v:I\to I'$ such that $m=m'v$.

     \end{itemize}
In the category of groups, the categorical image of a morphism $f:G\to H$ is the standard image $\ima f = \{f(x) : x\in G\}$ together with the inclusion morphism $i: \ima f \to H$.

    \begin{corollary} The image of the morphism $F(f):F(G)\to F(H)$ in $\mathbf{\mathcal{C}_{n, \phi}}$ exists and is the monomorphism $F(i): F(\ima f)\to F(H)$.
        
    \end{corollary}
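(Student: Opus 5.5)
We will argue exactly as for the kernel and cokernel corollaries: transport the universal property of the image from $\mathbf{Grp}$ to $\mathcal{C}_{n,\phi}$. The image in $\mathbf{Grp}$ is the inclusion $i:\ima f\to H$, with factorization $f=i\circ e$, where $e:G\to\ima f$ is the corestriction $x\mapsto f(x)$. Applying the functor gives $F(f)=F(i)\circ F(e)$, which is condition (i). By Proposition 1, $F(i)$ is injective, and by the Corollary on epis and monos it is a monomorphism in $\mathcal{C}_{n,\phi}$. It is also worth recording concretely that $F(\ima f)$ is the subgraph of $F(H)$ on the vertex set $(\ima f)^n$, and that this set coincides with the set-theoretic image of $F(f)$: a tuple $(f(x_1),\dots,f(x_n))$ is an arbitrary element of $(\ima f)^n$.

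For condition (ii), take an object $F(I')$ of $\mathcal{C}_{n,\phi}$ with a morphism $F(e'):F(G)\to F(I')$ and a monomorphism $F(m'):F(I')\to F(H)$ such that $F(f)=F(m')\circ F(e')$. We may write these morphisms as $F(e')$ and $F(m')$ because every morphism of $\mathcal{C}_{n,\phi}$ has this form. We then pull everything back to $\mathbf{Grp}$:
\begin{itemize}
\item Faithfulness (Proposition 2) gives $f=m'\circ e'$.
\item $m'$ is a monomorphism in $\mathbf{Grp}$. If $m'g_1=m'g_2$, then $F(m')F(g_1)=F(m')F(g_2)$, so $F(g_1)=F(g_2)$, and faithfulness gives $g_1=g_2$.
\end{itemize}
The universal property of $\ima f$ in $\mathbf{Grp}$ now supplies a unique $v:\ima f\to I'$ with $i=m'\circ v$. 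Applying $F$ yields $F(i)=F(m')\circ F(v)$.

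For uniqueness of $F(v)$: any morphism $\mathcal{C}_{n,\phi}$-morphism $F(\ima f)\to F(I')$ has the form $F(v')$. If it also satisfies the factorization, then faithfulness gives $i=m'v'$, and uniqueness in $\mathbf{Grp}$ forces $v'=v$. Alternatively, uniqueness follows at once because $F(m')$ is a monomorphism.

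The only delicate point is the reflection of monomorphisms used above. Here it is immediate from faithfulness, since the test morphisms in $\mathcal{C}_{n,\phi}$ are exactly the $F(g)$; alternatively it follows from Proposition 1 together with the characterization of monos in $\mathbf{Grp}$. Everything else is routine diagram transport via the equivalence, so we would present the argument briefly, mirroring the kernel proof.
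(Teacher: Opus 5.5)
Your proposal is correct and takes the same route as the paper: the paper gets this corollary by transporting the $\mathbf{Grp}$-image $i:\ima f\to H$ across the equivalence $F$, and you carry out that transport explicitly in the style of its sample kernel proof, using faithfulness and the fact that every morphism of $\mathcal{C}_{n,\phi}$ is some $F(g)$; you also recover its remark that $F(\ima f)$ has vertex set $(\ima f)^n$, the set-theoretic image of $F(f)$. The one implicit ingredient is the paper's convention that $F$ is injective on objects, which guarantees that a morphism $F(G)\to F(I')$ in $\mathcal{C}_{n,\phi}$ comes from a homomorphism with domain exactly $G$ and codomain exactly $I'$.
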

    One may intuitively define the image of a graph homomorphism $F(f):F(G) \to F(K)$ as the graph with vertex set $V(\ima F(f))=\{y \in F(K):y=F(f)(x) \text{ for some }x\in F(G)\}$ and edges given by the evolution map $\phi$. This agrees with the above equation since $V(\ima F(f))=\{y\in K^n:y_i=f(x_i) \text{ for some $x_i \in G$}\}=V(F(\ima f))$.

    \subsection{Pullbacks}
    In the category of groups, the pullback of two morphisms $f:X\to Z$ and $g:Y \to Z$ is given by the group $P=\{(x,y)\in G\times Y:f(x)=g(y)\}$ with the usual projections $\pi_1$ and $\pi_2$, as illustrated by the following diagram:
    \begin{center}
        \begin{tikzcd}
        P \arrow[r, "\pi_1"] \arrow[d, "\pi_2"'] & X \arrow[d, "f"] \\
        Y \arrow[r, "g"']                        & Z               
        \end{tikzcd}
    \end{center}
    By taking the image in $F$ of this diagram we get the pullback in $\mathbf{\mathcal{C}_{n, \phi}}$.

    \subsection{Short exact sequences}
    As an equivalence of categories, $F$ already preserves short exact sequences. One may see this explicitly by considering an arbitrary short exact sequence:
        \[
        \begin{tikzcd}
        \{e\} \arrow[r] & K \arrow[r, "f"] & G \arrow[r, "g"] & H \arrow[r] & \{e\}
        \end{tikzcd}
        \]
    and its image in $F$:
         \[
         \begin{tikzcd}
        F(\{e\}) \arrow[r] & F(K) \arrow[r, "F(f)"] & F(G) \arrow[r, "F(g)"] & F(H) \arrow[r] & F(\{e\})
        \end{tikzcd}
        \]
    From sections 4.1 and 4.5, we have that $F(f)$ is injective and $F(g)$ is surjective. Additionally, we have that  $\ima(F(f))=F(\ima f)=F(\ker g)=\ker F(g)$.\newline
    Since the functor preserves short exact sequences, we may now apply standard techniques of homological algebra (snake lemma,  five lemma, etc, whenever this makes sense) to automata, something that to our knowledge has not been done before. 
    \begin{example}
        For any $n\geq3$, we have the following short exact sequence of groups, where $D_n=\langle \alpha, \rho : \alpha^2=\rho^n=1, \alpha\rho=\rho^{-1}\alpha\rangle$ is the dihedral group of order $2n$:
        \begin{center}
            \begin{tikzcd}
            \{e\} \arrow[r] & \mathbb Z_n \arrow[r, "i"] & D_n \arrow[r, "f"] & \mathbb Z_2 \arrow[r] & \{e\}
            \end{tikzcd}
        \end{center}
        with $i(x)=\rho^x$, and $f(\sigma)=\begin{cases}
            0 \text{ if } \sigma= \rho^k \\
            1 \text{ if } \sigma = \alpha \rho^k
        \end{cases}$. We can see that $i$ is injective and $f$ is surjective. Adding the parity morphism $\delta:S_n \to \mathbb Z_2$ gives:
        \begin{center}
            \begin{tikzcd}
            & & & S_n \arrow[d, "\delta"]& \\
            \{e\} \arrow[r] & \mathbb Z_n \arrow[r, "i"] & D_n \arrow[r, "f"] & \mathbb Z_2 \arrow[r] & \{e\}
            \end{tikzcd}
        \end{center}
        Then we construct the pullback $P_n = \{(x,y)\in D_n\times S_n : f(x)=\delta(y)\}$:
        \begin{center}
            \begin{tikzcd}
            &                            & P_n \arrow[r, "\pi_2"] \arrow[d, "\pi_1"] & S_n \arrow[d, "\delta"] &   \\
            \{e\} \arrow[r] & \mathbb Z_n \arrow[r, "i"] & D_n \arrow[r, "f"]                        & \mathbb Z_2 \arrow[r]   & \{e\}
            \end{tikzcd}
        \end{center}
        We then take the kernel of the morphism $\pi_2$, which the subgroup $\ker \pi_2=\{(x,y)\in P_n : y=e\}= \langle \rho\rangle \times \{e\}$. From basic homological results, we get the following commutative diagram where the rows are exact and the morphism $\mu$ is an isomorphism:
        \begin{center}
            \begin{tikzcd}
            \{e\} \arrow[r] \arrow[d] & \langle \rho\rangle \times \{e\} \arrow[r] \arrow[d, "\mu"] & P_n \arrow[r, "\pi_2"] \arrow[d, "\pi_1"] & S_n \arrow[d, "\delta"] \arrow[r] & \{e\} \arrow[d]  \\
            \{e\} \arrow[r]           & \mathbb Z_n \arrow[r, "i"]                           & D_n \arrow[r, "f"]                        & \mathbb Z_2 \arrow[r]             & \{e\}          
            \end{tikzcd}
        \end{center}
        The bottom line is in fact a right-split exact sequence, since there exists an injective morphism (in fact, $n$ morphisms) $g:\mathbb Z_2 \to D_n$ sending $0\mapsto e$, and $1 \in \mathbb Z_2$ to any of the $n$ elements of order $2$ in $D_n$ (i.e. the reflections) such that $f \circ g=1_{\mathbb Z_2}$. This makes sense since $D_n \cong \mathbb Z_n \rtimes \mathbb Z_2$. So we have $n+2$ morphisms into $D_n$, one of which is surjective ($\pi_1$), the others being injective. Note that the kernel of the surjective morphism $\delta$ is $A_n$, giving rise to another right-split exact sequence  $\{e\}\to A_n\to S_n\to \mathbb Z_2\to \{e\}$.   Applying the functor $F$ to this diagram yields the commutative exact diagram in $\mathbf{\mathcal C_{n,\phi}}$:

        \begin{center}
            \begin{tikzcd}
            
            \{e^n\} \arrow[r] \arrow[d] & F(\langle \rho\rangle \times \{e\}) \arrow[r] \arrow[d, "F(\mu)"] & F(P_n) \arrow[r, "F(\pi_2)"] \arrow[d, "F(\pi_1)"] & F(S_n) \arrow[d, "F(\delta)"] \arrow[r] & \{e^n\} \arrow[d]  \\
            \{e^n\} \arrow[r]           & F(\mathbb Z_n) \arrow[r, "F(i)"]                           & F(D_n) \arrow[r, "F(f)"]                        & F(\mathbb Z_2) \arrow[r]             & \{e^n\}          
            \end{tikzcd}
        \end{center}
    \end{example}

    The structure of the graph $F(D_n)$ can now be studied via the structures of the graphs $F(P_n)$, $F(\mathbb Z_n)$ and $F(\mathbb Z_2)$.  As we will see in the next section, the fact that $F(i)$ and $F(g)$ are injective, and $F(\pi_1)$ and $F(f)$ are surjective, will allow us the infer information on the cyclic structure of $F(D_n)$.

\section{Cyclic structure}
We now turn to study the cyclic structure of the state transition graphs. That is, we look to use the results obtained in the previous section to infer cyclic information about an automaton based on the underlying group and its properties.

\begin{remark}
If $f:G_1\to G_2$ is a group homomorphism, and $C$ is a cycle in $F(G_1)$, then since $F(f)$ is a graph morphism between functional graphs, $F(f)$ maps $C$ onto a cycle in $F(G_2)$ whose length must divide the length of $C$ (for us a loop is a cycle). 
    
\end{remark}

    \begin{remark}
        If $f:G_1\to G_2$ is an injective group homomorphism, then $F(G_1)$ is isomorphic to an induced subgraph of $F(G_2)$. In particular, if $F(G_1)$ has $k$ cycles of length $l$, then $F(G_2)$ has at least as many. For example, using the first of the Sylow Theorems, we have that if $p$ is a prime power dividing the order of $G$, then all cycles of $F(\mathbb Z_{p})$ are also in $F(G)$, since $G$ has a subgroup of order $p$ which is isomorphic to $\mathbb Z_{p}$.
    \end{remark}
    
\begin{remark} If $f:G_1\to G_2$ is surjective, then any cycle $D$ in $F(G_2)$ must be the homomorphic image by $F(f)$ of a cycle $C$ in $F(G_1)$.  Indeed, since $F(f)$ is surjective, for a given vertex $y\in D$ there is a vertex $x$ in $F(G_1)$ such that $F(f)(x)=y$.  Now if $x_1$ is the evolution of $x$, then $F(f)(x_1)$ must be the evolution of $y$, and hence on the cycle $D$ since $F(G_2)$ is a functional graph. Continuing this process, we construct a sequence $\{x=x_0,x_1, \dots\}$ in $F(G_1)$ where $x_i$ evolves to $x_{i+1}$ for each $i$, and $F(f)(x_i)$ evolves to $F(f)(x_{i+1})$ on the cycle $D$.  This sequence cannot be infinite, so will eventually give rise to a cycle $C$ which by construction must satisfy $F(f)(C)\subseteq D$.  Now from above $F(f)(C)$ is a cycle, contained in $D$, hence must be equal to $D$, since all cycles in a functional graph are simple.

 \end{remark}

 Combining these facts we get the following proposition:
\begin{proposition} Given a short exact sequence of groups

     \[
        \begin{tikzcd}
        \{e\} \arrow[r] & K \arrow[r, "f"] & G \arrow[r, "g"] & H \arrow[r] & \{e\}
        \end{tikzcd}
        \]
    The lengths of the cycles in $F(K)$ and $F(H)$ are bounded by the maximal length of the cycles in $F(G)$.

    \end{proposition}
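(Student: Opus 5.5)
The plan is to treat the two ends of the sequence separately, using the remarks just established for injective and surjective morphisms. First, exactness gives that $f:K\to G$ is injective and $g:G\to H$ is surjective. By Proposition 1, $F(f)$ is then injective and $F(g)$ is surjective. Throughout we work with finite groups, or more generally with graphs whose cycles have finite length, so that a maximal cycle length in $F(G)$ makes sense. If $F(G)$ has cycles of unbounded length, the statement is vacuous.

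For $F(K)$, take a cycle $C$ of length $l$ in $F(K)$. By the remark on injective morphisms, $F(f)$ embeds $F(K)$ as an induced subgraph of $F(G)$, and it commutes with $\phi$. Hence $F(f)(C)$ is a cycle in $F(G)$, and injectivity forces it to have exactly $l$ distinct vertices. Directly: if $\phi^m(F(f)(x))=F(f)(x)$, then $F(f)(\phi^m(x))=F(f)(x)$, so $\phi^m(x)=x$. Thus $F(G)$ contains a cycle of length $l$, and $l$ is at most the maximal cycle length of $F(G)$.

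For $F(H)$, take a cycle $D$ of length $l'$ in $F(H)$. By the remark on surjective morphisms, there is a cycle $C$ in $F(G)$ with $F(g)(C)=D$. By the first remark of this section, the length of the image cycle divides the length of $C$: if $C$ has length $m$, then $\phi^m$ fixes every image vertex, so $l'\mid m$. Hence $l'\le m$, which is at most the maximal cycle length in $F(G)$.

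The only delicate point is the surjective case. There we must make sure the lifted cycle $C$ really maps onto $D$ and not merely into it, since this is what lets us compare lengths. This is settled by the remark's argument that cycles in functional graphs are simple. Alternatively, we can avoid the issue entirely: any vertex $y\in D$ with $\phi^m(y)=y$ satisfies $l'\mid m$, and a lift $x$ of $y$ lies on some cycle after finitely many steps, whose image is the cycle $D$. The injective case is routine.
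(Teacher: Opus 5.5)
Your argument is correct and is essentially the paper's: there the proposition is obtained by combining the remark that the injective $F(f)$ embeds $F(K)$ as an induced subgraph of $F(G)$ with the remark that every cycle of $F(H)$ is the image under the surjective $F(g)$ of a cycle of $F(G)$, whose length it divides. The one thing to fix is the hedge ``or more generally with graphs whose cycles have finite length'': the lifting step genuinely needs $G$ finite, so that the forward orbit of a lift is eventually periodic. Without finiteness the statement can fail even when the cycle lengths of $F(G)$ are bounded. For example, take $w(X_1)=X_1^2$, $I=(0)$ and $0\to\mathbb Z\xrightarrow{3}\mathbb Z\to\mathbb Z_3\to 0$: the only cycle of $F(\mathbb Z)$ is the basepoint loop, while $F(\mathbb Z_3)$ has $2$-cycles.
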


\begin{example}
    In the previous example, since we have surjective morphisms $f:D_n\to \mathbb Z_2$ and $\pi_1:P_n \to  D_n$, we get that the lengths of the cycles in $F(D_n)$ are bounded by those of $F(\mathbb Z_2)$ and $F(P_n)$, and similarly, the lengths of the cycles in $F(S_n)$ are bounded by those of $F(\mathbb Z_2)$ and $F(P_n)$.

    Also, from injectivity, we get that the cycles of $F(\mathbb Z_n)$ and $F(\mathbb Z_2)$ all lie in $F(D_n)$.  Similarly $F(P_n)$ will contain all the cycles lying in $F(\mathbb Z_n)$ and $F(A_n)$ (since, from the snake lemma, it is the kernel of $F(\pi_1)$.

    This allows us to transfer information between graphs.
    
\end{example}

We also have the following result on reversibility:

    \begin{proposition}
        Given an surjective homomorphism $f:G_1\to G_2$, if $F(G_1)$ is surjective (reversible), then so is $F(G_2)$.
    \end{proposition}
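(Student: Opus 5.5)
The plan is to work entirely at the level of vertex maps, using only the naturality identity $\phi_{G_2}\circ F(f)=F(f)\circ\phi_{G_1}$ established in the verification that $F$ is a functor, together with Proposition 1, which tells us that $F(f)$ is surjective because $f$ is. Here ``$F(G)$ is surjective'' is read as: the global evolution map $\phi_G:G^n\to G^n$ is surjective, equivalently $F(G)$ has no Garden-of-Eden configurations. So the statement reduces to a diagram chase in the commutative square
\[
\begin{tikzcd}
G_1^n \arrow[r, "\phi_{G_1}"] \arrow[d, "F(f)"'] & G_1^n \arrow[d, "F(f)"] \\
G_2^n \arrow[r, "\phi_{G_2}"'] & G_2^n
\end{tikzcd}
\]

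The chase runs as follows. Fix $y\in G_2^n$. By Proposition 1, choose $x\in G_1^n$ with $F(f)(x)=y$. Since $\phi_{G_1}$ is surjective, choose $x'\in G_1^n$ with $\phi_{G_1}(x')=x$. Then
\[
\phi_{G_2}\big(F(f)(x')\big)=F(f)\big(\phi_{G_1}(x')\big)=F(f)(x)=y,
\]
so $y$ has a predecessor and $\phi_{G_2}$ is surjective. Equivalently, $\phi_{G_2}\circ F(f)=F(f)\circ\phi_{G_1}$ is a composite of surjections, so $\phi_{G_2}$ is surjective.

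For the ``reversible'' reading, I would add the following remark. When $G_1$ (hence $G_2$) is finite, surjectivity of a self-map of a finite set is the same as bijectivity. So $F(G_2)$ is then a disjoint union of cycles, and $\phi_{G_2}$ is reversible. For infinite groups I would state only the surjectivity conclusion. Injectivity does not transfer in general, since identifying configurations modulo $\ker f$ could collapse distinct preimages.

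The main obstacle is simply this interpretive point of what ``reversible'' means for infinite alphabets; the chase itself is routine. An alternative route is through Remark 4 on surjective morphisms, but the direct chase above avoids any finiteness assumption.
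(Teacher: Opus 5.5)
Your proof is correct and is the same diagram chase as the paper's: you lift $y\in G_2^n$ through the surjection $F(f)$ (Proposition 1), take a $\phi_{G_1}$-preimage, and push it back down using $\phi\circ F(f)=F(f)\circ\phi$. Your side remark that injectivity need not descend for infinite alphabets is also right, and the paper likewise only proves the surjectivity conclusion. For instance, $w(X)=X^2$ is bijective on $\mathbb{Q}^n$ but not injective on $(\mathbb{Q}/\mathbb{Z})^n$.
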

    \begin{proof}
        Take $y \in F(G_2)$, by proposition 4, $F(f)$ is surjective, hence $y = F(f)(x)$ for some $x \in F(G_1)$. Since $F(G_1)$ is surjective, $x=\phi(a)$ for some $a \in F(G_1)$. So we have that $y=F(f)(x)=F(f)(\phi(a))=\phi(F(f)(a))$ and so $F(G_2)$ is surjective.
    \end{proof}
    As previously mentioned, the idea of product of automata has been studied before \cite[Ex. 1.17]{cecchSMM}\cite{castillocatprod}\cite{dennunziodyn}. We now give a way to obtain the full cyclic structure of this automaton from that of the factor automata. We denote by $m_l(\Gamma)$ the number of cycles of length $l$ in the graph $\Gamma$, and by $S_{\Gamma}$ the set of cycle lengths in the graph $\Gamma$.
    \begin{definition}[\cite{hammackhandbook}]
        The disjoint union of two graphs $\Gamma_1$ and $\Gamma_2$, denoted by $\Gamma_1 \oplus \Gamma_2$ is the graph whose vertex set and edge sets are the disjoint unions of those of the component graphs. (Note that we assume that $V(\Gamma_1)\cap V(\Gamma_2) = \emptyset$.) For any $k\in \mathbb{N}$, we denote by $k\cdot\Gamma$ the disjoint union of $k$ copies of the graph $\Gamma$.
    \end{definition}

    \begin{lemma}[\cite{hammackhandbook}]
        If $\Gamma_1$, $\Gamma_2$, and $\Gamma_3$ are graphs, then the tensor product distributes over the disjoint union:
        \begin{align}
            \Gamma_1 \otimes (\Gamma_2 \oplus \Gamma_3) = (\Gamma_1 \otimes \Gamma_2) \oplus (\Gamma_1\otimes \Gamma_3)
        \end{align}
    \end{lemma}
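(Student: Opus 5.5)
The plan is to exhibit an explicit graph isomorphism given by the identity on the vertex set. We first check that the vertex sets agree. By definition of the disjoint union (with the standing assumption $V(\Gamma_2)\cap V(\Gamma_3)=\emptyset$),
\[
V\bigl(\Gamma_1\otimes(\Gamma_2\oplus\Gamma_3)\bigr)=V(\Gamma_1)\times\bigl(V(\Gamma_2)\sqcup V(\Gamma_3)\bigr).
\]
Since the Cartesian product distributes over disjoint union of sets, this equals $\bigl(V(\Gamma_1)\times V(\Gamma_2)\bigr)\sqcup\bigl(V(\Gamma_1)\times V(\Gamma_3)\bigr)$. The two pieces are disjoint because their second coordinates lie in disjoint sets. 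So this is exactly the vertex set of $(\Gamma_1\otimes\Gamma_2)\oplus(\Gamma_1\otimes\Gamma_3)$, and we may take the identity map on vertices.

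Next we compare edge sets. A pair $\bigl((g,h),(g',h')\bigr)$ is an edge of the left-hand side iff $(g,g')\in E(\Gamma_1)$ and $(h,h')\in E(\Gamma_2\oplus\Gamma_3)=E(\Gamma_2)\sqcup E(\Gamma_3)$. We then split into the two cases $(h,h')\in E(\Gamma_2)$ or $(h,h')\in E(\Gamma_3)$. In the first case $h,h'\in V(\Gamma_2)$, so the edge is an edge of $\Gamma_1\otimes\Gamma_2$. In the second case it is an edge of $\Gamma_1\otimes\Gamma_3$. Each implication reverses directly from the definition of the tensor product.

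The only point needing care, and the mild obstacle, is this: an edge of the disjoint union never joins a vertex of $\Gamma_2$ to one of $\Gamma_3$. Hence no edge of the left side straddles the two blocks $V(\Gamma_1)\times V(\Gamma_2)$ and $V(\Gamma_1)\times V(\Gamma_3)$. This is exactly what makes the right-hand side a genuine disjoint union, with no edges between the summands. It follows immediately from $E(\Gamma_2\oplus\Gamma_3)=E(\Gamma_2)\sqcup E(\Gamma_3)$.

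Once vertices and edges match, the equality (or canonical isomorphism, if one prefers not to fix the disjoint-union representatives) follows. As a side remark, the same argument applies to $k\cdot\Gamma$ by induction, giving $\Gamma_1\otimes(k\cdot\Gamma)=k\cdot(\Gamma_1\otimes\Gamma)$. This is the form we expect to use when decomposing products of cycles.
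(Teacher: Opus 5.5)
Your proof is correct and complete. The identity on $V(\Gamma_1)\times\bigl(V(\Gamma_2)\sqcup V(\Gamma_3)\bigr)$ works, you check that the two blocks are disjoint so the right-hand side is a legitimate disjoint union, and your edge comparison already covers the ``no straddling edges'' point. The paper gives no proof of its own and simply cites the handbook of Hammack, Imrich and Klav\v{z}ar, where this is the same direct verification from the definitions.
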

    
    \begin{lemma}[\cite{hammackhandbook}]
        Let $C_k$ denote a graph consisting of a single cycle of length $k$. Then 
        \begin{align}
            C_p \otimes C_q \cong \gcd(p,q)\cdot C_{\lcm(p,q)}
        \end{align}
    \end{lemma}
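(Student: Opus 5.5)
We have to prove $C_p\otimes C_q\cong\gcd(p,q)\cdot C_{\lcm(p,q)}$, where the tensor product is taken in the directed, functional sense fixed earlier. The plan is to describe both cycles as cyclic groups with the successor map and then read off the orbit structure of the product map.

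\textbf{Setup.} Identify the vertex set of $C_p$ with $\mathbb Z_p$, with the unique edge out of $a$ going to $a+1$; do the same for $C_q$ with $\mathbb Z_q$. By the definition of the tensor product of graphs, $C_p\otimes C_q$ has vertex set $\mathbb Z_p\times\mathbb Z_q$. Each vertex $(a,b)$ has exactly one outgoing edge, namely to $(a+1,b+1)$. So $C_p\otimes C_q$ is the functional digraph of the permutation $\sigma(a,b)=(a+1,b+1)$.

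Since $\sigma$ is a bijection, the graph has no trees hanging off cycles: it is a disjoint union of cycles, one for each orbit of $\sigma$.

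\textbf{Orbit lengths.} The orbit of $(a,b)$ has length equal to the least $t\ge 1$ with $(a+t,b+t)=(a,b)$. This means $p\mid t$ and $q\mid t$, so $t=\lcm(p,q)$. This value is independent of the starting vertex, so every orbit is a cycle of length $\lcm(p,q)$.

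\textbf{Counting cycles.} Counting vertices, the number of cycles is $pq/\lcm(p,q)=\gcd(p,q)$. Hence
\[C_p\otimes C_q\cong \gcd(p,q)\cdot C_{\lcm(p,q)}.\]

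If one wants to identify the cycles explicitly, note that $b-a \bmod \gcd(p,q)$ is invariant under $\sigma$. Its $\gcd(p,q)$ values separate the orbits: by the Chinese remainder theorem for non-coprime moduli, the pairs $(a,b)$ with a fixed residue of $b-a$ form a single orbit. This refinement is not needed, since the counting argument already suffices.

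\textbf{Main obstacle.} The only delicate point is checking that the product really is a union of cycles with no in-trees, and that loops ($p=1$ or $q=1$) are covered. Both follow from the bijectivity of $\sigma$ and the fact that the orbit-length argument applies verbatim when $p$ or $q$ equals $1$.
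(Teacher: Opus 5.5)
Your proof is correct and complete. The paper does not prove this lemma; it only cites the \emph{Handbook of Product Graphs}, so your argument fills in what the citation leaves out. Your route is the standard one: you realize $C_p\otimes C_q$ as the functional digraph of the permutation $(a,b)\mapsto(a+1,b+1)$ on $\mathbb Z_p\times\mathbb Z_q$, show every orbit has length $\lcm(p,q)$, and count $pq/\lcm(p,q)=\gcd(p,q)$ orbits. You were also right to insist on the directed reading of $C_k$ and of $\otimes$, which is the one the paper's definitions give. For undirected cycles with $p,q\ge 3$ the tensor product is $4$-regular, so the statement would fail. Your invariant $b-a \bmod \gcd(p,q)$ correctly labels the $\gcd(p,q)$ components explicitly.
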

    \begin{corollary}
        The possible cycles lengths $S_{\Gamma_1 \otimes \Gamma_2}$ in $\Gamma_1 \otimes \Gamma_2$ are:
        \begin{equation}
            S_{\Gamma_1 \otimes \Gamma_2}=\{\lcm(p,q):p \in S_{\Gamma_1} \text{ and } q \in S_{\Gamma_2}\}
        \end{equation}
    \end{corollary}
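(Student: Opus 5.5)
The plan is to reduce the statement to the two preceding lemmas, using the structure of the cycles in a functional digraph.

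First I isolate the cyclic part. In a functional digraph every vertex lies either on exactly one cycle or on a tail leading into one. I claim that the cycles of $\Gamma_1 \otimes \Gamma_2$ are exactly the cycles of $Z(\Gamma_1)\otimes Z(\Gamma_2)$, where $Z(\Gamma)$ denotes the subgraph induced on the cyclic vertices of $\Gamma$. The product is again functional: $(g,h)$ has the unique successor $(\phi_1(g),\phi_2(h))$.

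A vertex $(g,h)$ is cyclic in the product if and only if $(\phi_1,\phi_2)^m(g,h)=(g,h)$ for some $m\ge 1$. In that case $\phi_1^m(g)=g$ and $\phi_2^m(h)=h$, so both $g$ and $h$ are cyclic. Conversely, suppose $g$ lies on a cycle of length $p$ and $h$ on a cycle of length $q$. Then $m=\lcm(p,q)$ returns both coordinates to their starting points, so $(g,h)$ is cyclic. Moreover, a tail vertex of $\Gamma_1$ or $\Gamma_2$ can never appear on a cycle of the product. Hence the cyclic part of the product is exactly $Z(\Gamma_1)\otimes Z(\Gamma_2)$.

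Next I decompose. Write $Z(\Gamma_1)=\bigoplus_{p} m_p(\Gamma_1)\cdot C_p$ and $Z(\Gamma_2)=\bigoplus_q m_q(\Gamma_2)\cdot C_q$. Applying the distributivity lemma twice (tensor is symmetric up to isomorphism, so it distributes on both sides) gives
\[
Z(\Gamma_1)\otimes Z(\Gamma_2)\cong\bigoplus_{p\in S_{\Gamma_1}}\bigoplus_{q\in S_{\Gamma_2}} m_p(\Gamma_1)m_q(\Gamma_2)\cdot (C_p\otimes C_q).
\]
The second lemma then replaces each $C_p\otimes C_q$ by $\gcd(p,q)\cdot C_{\lcm(p,q)}$. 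Reading off the lengths that appear yields exactly $\{\lcm(p,q): p\in S_{\Gamma_1},\,q\in S_{\Gamma_2}\}$. Each pair $(p,q)$ contributes at least one cycle, since $\gcd(p,q)\ge1$ and the multiplicities are positive. As a byproduct, the same display gives the multiplicity formula $m_l(\Gamma_1\otimes\Gamma_2)=\sum_{\lcm(p,q)=l} m_p(\Gamma_1)\,m_q(\Gamma_2)\gcd(p,q)$.

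The main obstacle is the first step: justifying that tails contribute no cycles, so that restricting to cyclic parts is legitimate. This is where the functional-graph property is essential. The tensor product of the tail-trees themselves is irrelevant, because any product vertex with a tail coordinate has that coordinate strictly moving toward its cycle and never returning. I will make this precise via the iterate characterization of cyclic vertices above. I will also note that the distributivity lemma is applied to the finite disjoint unions of cycles, so no convergence issues arise for finite $\Gamma_1,\Gamma_2$.
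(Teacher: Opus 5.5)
Your proof is correct and follows the route the paper intends. The paper gives no separate proof of this corollary; it reads it off from Lemma 1 (distributivity of $\otimes$ over disjoint unions) and Lemma 2 ($C_p\otimes C_q\cong\gcd(p,q)\cdot C_{\lcm(p,q)}$), exactly as you do. Your iterate argument, that $(g,h)$ is cyclic exactly when both coordinates are cyclic, makes rigorous the claim that transient vertices create no cycles, which the paper only asserts in passing in the proof of Theorem 2.
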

    
    \begin{theorem}
        The number $ m_l$ of cycles of length $l$ in $\Gamma_1 \otimes \Gamma_2$ is given by
        \begin{equation}
            m_l\big(\Gamma_1 \otimes \Gamma_2\big)=\sum_{\substack{p\in S_{\Gamma_1}, q \in S_{\Gamma_2}\\ \lcm(p,q)=l} }\gcd(p,q) m_p(\Gamma_1)m_q(\Gamma_2)
        \end{equation}
    \end{theorem}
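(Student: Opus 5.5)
We would prove the formula by decomposing each factor graph into its cycles and the tree parts hanging off them, and then applying Lemma 1 (distributivity of $\otimes$ over $\oplus$) and Lemma 2 ($C_p\otimes C_q\cong \gcd(p,q)\cdot C_{\lcm(p,q)}$).

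\textbf{Step 1: reduce to the cyclic parts.} Since $\Gamma_1$ and $\Gamma_2$ are functional digraphs, their tensor product $\Gamma_1\otimes\Gamma_2$ is again functional. The map on a vertex $(g,h)$ is $(g,h)\mapsto(\phi_1(g),\phi_2(h))$.

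A vertex $(g,h)$ is periodic, i.e.\ lies on a cycle, if and only if $g$ and $h$ are both periodic. For the forward direction, if $(g,h)$ returns to itself after $t$ steps, then $g$ returns to itself after $t$ steps in $\Gamma_1$ and $h$ does so in $\Gamma_2$. For the converse, if $g$ has period $p$ and $h$ has period $q$, then $(g,h)$ returns to itself after $\lcm(p,q)$ steps.

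Hence the cycles of $\Gamma_1\otimes\Gamma_2$ are exactly the cycles of $Z_1\otimes Z_2$, where $Z_i$ is the subgraph of $\Gamma_i$ induced on its periodic vertices. This subgraph is a disjoint union of cycles:
\[
Z_i\cong \bigoplus_{p\in S_{\Gamma_i}} m_p(\Gamma_i)\cdot C_p .
\]
The non-periodic vertices of the product only form trees feeding into these cycles, so they contribute nothing to $m_l$.

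\textbf{Step 2: expand using the lemmas.} Lemma 1 applies in both arguments, because $\otimes$ is symmetric up to isomorphism via $(g,h)\mapsto(h,g)$. Applying it repeatedly gives
\[
Z_1\otimes Z_2\cong\bigoplus_{p\in S_{\Gamma_1}}\bigoplus_{q\in S_{\Gamma_2}} m_p(\Gamma_1)m_q(\Gamma_2)\cdot (C_p\otimes C_q).
\]
Lemma 2 then replaces each summand $C_p\otimes C_q$ with $\gcd(p,q)$ copies of $C_{\lcm(p,q)}$.

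\textbf{Step 3: count cycles of length $l$.} We collect the summands with $\lcm(p,q)=l$. Each pair $(p,q)$ contributes $\gcd(p,q)\,m_p(\Gamma_1)m_q(\Gamma_2)$ cycles of length $l$, which yields the stated formula. The same computation recovers Corollary 4 for the set $S_{\Gamma_1\otimes\Gamma_2}$ of cycle lengths.

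\textbf{Main obstacle.} The only real subtlety is Step 1: we must make sure that no cycle of the product passes through a non-periodic vertex of either factor. The projection argument settles this. Projecting a cycle of the product to either coordinate gives a closed walk in a functional graph, and every vertex on a closed walk in a functional graph is periodic.

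If the graphs are infinite, as for infinite groups, the same argument works, provided the multiplicities $m_p(\Gamma_i)$ are finite or are read as cardinals. In the setting of the paper the groups are finite, so all these counts are finite.
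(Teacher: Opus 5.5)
Your proof is correct and follows the same route as the paper: distribute $\otimes$ over the decomposition of the periodic parts into disjoint cycles (Lemma 1), replace each $C_p\otimes C_q$ by $\gcd(p,q)\cdot C_{\lcm(p,q)}$ (Lemma 2), and sum the contributions of the pairs with $\lcm(p,q)=l$. Your Step 1, showing that $(g,h)$ is periodic if and only if both coordinates are, justifies what the paper only asserts in its closing remark, namely that transient vertices neither create nor destroy cycles in the product.
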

    \begin{proof}
        By Lemma 2, each pair of cycles of lengths $p,q$ with $\lcm(p,q)=l$ will contribute $\gcd(p,q)$ cycles of length $l$. By Lemma 1, the tensor product distributes over the disjoint union, allowing us to apply this component-wise. Summing over all such $p,q$ cycles lengths in $\Gamma_1, \Gamma_2$ respectively yields the result. Note that transient behavior in the factor graphs neither contributes to nor strips away cyclic behavior in the product, hence Equation 3. does not account for it.
    \end{proof}
    Together with the Fundamental Theorem of Finite Abelian Groups, Theorems 1 and 2 allow us to compute the cycle structure of the state transition graphs of an automaton over any finite abelian group alphabet using that of the component cyclic groups. That is, for any $G \cong \prod_{i \in I} \mathbb{Z}_i$, knowing $\{F_{n,\phi}(\mathbb{Z}_i)\}_{i \in I}$ one can compute both $F_{n, \phi}(G)$ and its cycle structure.
    
    \begin{example}
        We use Theorems 1 and 2 to calculate the multiplicities of cycles in the graph $F_{17, \phi_{90}}(K_4)$ (where $K_4$ is the Klein 4-group) using the fact that $K_4 \cong \mathbb{Z}_2\times \mathbb{Z}_2$. We reproduce the $F(\mathbb{Z}_2)$ cycle information from Table 2. of \cite{wolfram1984} in the first and second column:
        
        \begin{center}
    \begin{tabular}{ |c|c|c| } 
    
      Cycle length & Multiplicity in $F(\mathbb{Z}_2)$ & Multiplicity in $F(K_4)$ \\ 
      \hline
      $1$ & $1$ & $1$\\  
      $5$ & $51$ & $13,107$ \\  
      $15$ & $4,352$ & $286,326,784$   
    \end{tabular}
    \end{center}
    \end{example}
    S. Wolfram shows in \cite{wolfram1984} that over $\mathbb Z_p$, the lengths of all cycles divide the length of the maximal length cycle. For an arbitrary group however, this is not true in general. For example, see Figure \ref{fig:S3_N3_R90}, where in the state transition graph of a Rule 90 automaton over $S_3$, there are 6-cycles, but 6 does not divide 8, which is the maximal cycle length.
    
    \begin{figure}[htbp]
        \centering
        \includegraphics[width=1\textwidth]{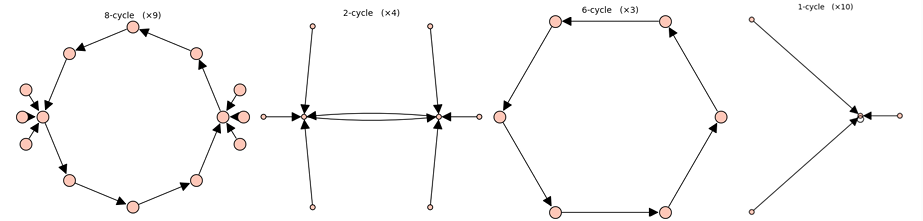} 
        \caption{The state transition graph $F_{3,\phi_{90}}(S_3)$ shows how divisibility of cycle lengths can fail for automata over non-abelian alphabets.}
        \label{fig:S3_N3_R90}
    \end{figure}
    
    \begin{theorem}
        When $G$ is a finite abelian group, all cycle lengths in the graphs of the factor cyclic groups divide the maximal cycle length in $F(G)$ Furthermore, all cycles lengths in $F(G)$ also divide the maximal cycle length.
    \end{theorem}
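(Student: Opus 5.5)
Write $G\cong \mathbb Z_{d_1}\times\cdots\times\mathbb Z_{d_r}$ using the Fundamental Theorem of Finite Abelian Groups. By Theorem 1, applied inductively,
\[
F(G)\cong F(\mathbb Z_{d_1})\otimes\cdots\otimes F(\mathbb Z_{d_r}).
\]
By Corollary 7 (also applied inductively), the set of cycle lengths of $F(G)$ is
\[
S_{F(G)}=\{\lcm(p_1,\dots,p_r): p_i\in S_{F(\mathbb Z_{d_i})}\}.
\]
Both claims reduce to one fact: for each $i$, the set $S_{F(\mathbb Z_{d_i})}$ has a maximum $M_i$, and every element of $S_{F(\mathbb Z_{d_i})}$ divides $M_i$.

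Given that fact, set $M=\lcm(M_1,\dots,M_r)$.
\begin{itemize}
\item $M$ lies in $S_{F(G)}$, since we may choose $p_i=M_i$.
\item Every element $\lcm(p_1,\dots,p_r)$ of $S_{F(G)}$ divides $M$, because $p_i\mid M_i$ for each $i$.
\item Since every cycle length divides $M$ and $M$ itself occurs, $M$ is the maximal cycle length of $F(G)$. This proves the second claim.
\item Each $p\in S_{F(\mathbb Z_{d_i})}$ divides $M_i$, hence divides $M$. This proves the first claim.
\end{itemize}

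It remains to prove the divisibility property for a cyclic alphabet $A=\mathbb Z_d$, and I will prove it for any finite abelian $A$ directly. Since $A$ is abelian, the word $w$ reduces to $\sum_k a_k X_k$ with $a_k\in\mathbb Z$. Hence $\phi$ is an endomorphism of the finite abelian group $A^n$.

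The eventual image $E=\bigcap_t \phi^t(A^n)$ is a subgroup. It is exactly the set of cyclic nodes, and $\phi$ restricts to an automorphism $\psi$ of $E$. Let $N$ be the order of $\psi$ in the finite group $\operatorname{Aut}(E)$.

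Every cycle length $\ell$ is the period of some $x\in E$, meaning the least $\ell\ge1$ with $\psi^\ell(x)=x$. Since $\psi^N(x)=x$, we get $\ell\mid N$. So it suffices to exhibit a single node whose period is exactly $N$.

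The main obstacle is exhibiting that node, because distinct points may have different periods. I would argue as follows. The stabilizer $\{x\in E:\psi^m(x)=x\}=\ker(\psi^m-1)$ is a subgroup for every $m$.
\begin{itemize}
\item Write $N=\prod_j q_j^{e_j}$ with the $q_j$ distinct primes.
\item For each $j$, the subgroup $H_j=\ker(\psi^{N/q_j}-1)$ is proper in $E$, since otherwise the order of $\psi$ would divide $N/q_j$.
\item A finite group is never a union of proper subgroups that all sit inside a common family like this in general, so I need more care: the subgroups $H_j$ are kernels of commuting endomorphisms of $E$.
\end{itemize}

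Rather than fight covering issues directly, I would decompose $E$ into its primary components, but the cleaner route is the following. Take elements $x_j\notin H_j$ and let $\ell_j$ be the period of $x_j$. Then $q_j^{e_j}\mid \ell_j$, because $\ell_j\mid N$ but $\ell_j\nmid N/q_j$. Now use additivity: for $x,y\in E$ with periods $a,b$, the period of $x+y$ divides $\lcm(a,b)$. By a standard argument, one of $x$, $y$, or $x+y$ has period divisible by the $q$-part of $\lcm(a,b)$ for each prime $q$. Alternatively, apply Corollary 7's lcm-closure to the decomposition $E\cong\bigoplus_j E_j$ into $\psi$-invariant pieces.

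Concretely, I would view $E$ as a $\mathbb Z[t]$-module with $t$ acting by $\psi$. Periods of elements are then the exponents of annihilating polynomials of the form $t^\ell-1$, and the element-order argument for finite modules over $\mathbb Z[t]/(t^N-1)$ gives an element of exact period $N$. This step is where I expect the most work.
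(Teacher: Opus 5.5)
Your reduction is the paper's own argument: decompose $G$ into cyclic factors, apply Theorem~1 and the $\lcm$ description of cycle lengths in a tensor product, and take $M=\lcm(M_1,\dots,M_r)$. The paper then defers the cyclic case to Martin--Odlyzko--Wolfram. You instead try to prove it, and your setup is sound: $\phi$ is an endomorphism, the cyclic nodes form a subgroup $E$, $\psi=\phi|_E$ has some order $N$, and every period divides $N$. So everything rests on finding one $x\in E$ of period exactly $N$. \emph{This step is a genuine gap}: none of your three sketches establishes it.

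\begin{itemize}
\item The claim about $x$, $y$, $x+y$ is trivial one prime at a time, since whichever of $x,y$ has the larger $q$-part already works. It is false if one element must work for all primes at once. Let $\psi$ act on $\mathbb F_4\oplus\mathbb F_{16}\oplus\mathbb F_8$ by multiplication by elements of orders $3,5,7$. Then $(a,0,c)$, $(0,b,c)$ and their sum $(a,b,0)$, with $a,b,c\neq 0$, have periods $21$, $35$, $15$.
\item Your other two sketches need either a $\psi$-invariant splitting $E=\bigoplus_j E_j$ with $\psi|_{E_j}$ of order $q_j^{e_j}$, or, as in the element-order argument for abelian groups, elements of period exactly $q_j^{e_j}$ to add together. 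Adding elements of coprime periods does work; producing them is what fails.
\end{itemize}

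Neither object need exist, already inside the paper's framework. Over the alphabet $\mathbb Z_2$ with $n=6$ and $\phi(x)_i=x_{i+1}+x_{i+3}$, one has $\phi^2(x)_i=x_i+x_{i+2}$, so $e^n$ is the only node with $\phi^2(x)=x$. The $16$ cyclic nodes (those with $x_1+x_3+x_5=x_2+x_4+x_6=0$) form one loop, one $3$-cycle and two $6$-cycles. So $N=6$, but no node has period $2$. Here $E\cong\mathbb F_2[t]/((t^2+t+1)^2)$ is indecomposable, and $\mathbb Z[t]/(t^N-1)$ is not a PID, so there is no cyclic-vector theorem to fall back on.

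The missing idea is to separate the prime of the alphabet from the primes of $N$.
\begin{itemize}
\item The primary decomposition $E=\bigoplus_p E_p$ is a genuine $\psi$-invariant direct sum, so periods combine by $\lcm$ and you may assume $p^kE=0$.
\item Write $N=p^am$ with $p\nmid m$, and $\psi=\alpha\beta$ with $\alpha,\beta$ powers of $\psi$ of orders $p^a$ and $m$. The period of any $x$ is then the product of its $\alpha$-period and its $\beta$-period.
\item Because $t^m-1$ is separable mod $p$, the $\Phi_d$ with $d\mid m$ are pairwise comaximal in $(\mathbb Z/p^k)[t]$. Hence $E=\bigoplus_{d\mid m}E^{(d)}$ with each $E^{(d)}=\ker\Phi_d(\beta)$ $\psi$-invariant.
\item Every nonzero element of $E^{(d)}$ has $\beta$-period exactly $d$. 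Indeed, if $d\nmid\ell$ then $r(t)(t^\ell-1)+s(t)\Phi_d(t)=1+p\,h(t)$ for suitable polynomials, and $1+p\,h(\beta)$ is invertible.
\item On a nonzero $E^{(d)}$, let $p^{a_d}$ be the order of $\alpha$. Any element outside the single proper subgroup $\ker(\alpha^{p^{a_d-1}}-1)$ (any nonzero element if $a_d=0$) has period $p^{a_d}d$.
\item Adding one such element from each summand gives period $\lcm_d(p^{a_d}d)=N$.
\end{itemize}
The $p$-part is never split off, only avoided inside each summand, which is exactly what the example forces. Alternatively, cite the prime-power case as the paper does, in which case your proof coincides with the paper's.
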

    \begin{proof}
        By the Fundamental Theorem of Finite Abelian Groups, $G \cong \mathbb Z_{{p_1}^{e_1}}\times\cdots \times \mathbb Z_{{p_k}^{e_k}}$, hence by Theorem 1, $F(G)\cong F(Z_{{p_1}^{e_1}})\otimes \cdots \otimes F(Z_{{p_k}^{e_k}})$. It follows that the maximal cycle length in $F(G)$, denoted as $\pi_G$, is the least common multiple of the maximal cycle lengths in the factor graphs, hence they all divide it. Now in the $F(\mathbb Z_{{p_i}^{e_i}})$, methods from \cite{wolfram1984} can be used to show all cycle lengths divide the maximal length. It follows by transitivity of division that they also divide $\pi_G$. Additionally, since the least common multiple is monotone with respect to divisibility, and since $a_i \mid a_{max}, b_j\mid b_{max}, \cdots , k_l \mid k_{max}$, then $\lcm(a_i,b_j,\dots,k_l)\mid \lcm(a_{max},b_{max},\dots, k_{max})=\pi_G$ where $a_i$ is a cycle length in the first factor graph, and $a_{max}$ its maximal cycle length, etc. Since every cycle length in $F(G)$ is of this form, it must divide $\pi_G$.
    \end{proof}
    Another result in \cite{wolfram1984} which does not generalize to non-abelian cases is the uniformity of indegrees mentioned in Section 2, that is when $G$ is abelian, the size of evolution preimages are either $0$ or exactly $\lvert\ker\phi\rvert$, . However, in non-abelian cases this does not hold, neither do preimages divide the kernel size. Take for example the state transition graph in Figure \ref{Q8_N3_R90}. Several nodes have indegree $6$ whereas the identity node, which must be a fixed point since the quiescence condition is respected, has indegree $2$.
    \begin{figure}[htbp]
        \centering
        \includegraphics[width=0.8\textwidth]{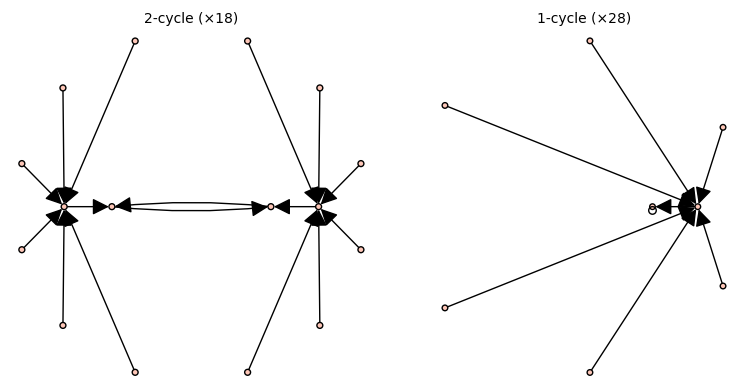} 
        \caption{The state transition graph $F_{3,\phi_{90}}(Q_8)$ shows how divisibility of indegrees can fail for automata over non-abelian groups.}
        \label{Q8_N3_R90}
    \end{figure}
    
    \section{Discussion and Conclusion}

In this paper, we introduced a functorial framework to bridge abstract algebra and the dynamical properties of cellular automata. By defining the category $\mathcal{C}_{n,\phi}$ of state transition graphs of group-alphabet cellular automata (GACA) and constructing a faithful functor $F_{n,\phi}: \mathbf{Grp} \to \text{pfG}$, we established an equivalence of categories between $\mathbf{Grp}$ and $\mathcal{C}_{n,\phi}$. 

This equivalence allows for a direct dictionary between algebraic operations on group alphabets and graph-theoretical transformations on state transition spaces:
\begin{itemize}
    \item Categorical products in $\mathbf{Grp}$ correspond precisely to graph tensor products in $\mathcal{C}_{n,\phi}$, yielding exact formulas for cycle lengths and multiplicities in product automata $F(G_1 \times G_2)$.
    \item Standard algebraic constructions---\allowbreak including kernels, cokernels, pullbacks, and short exact sequences---\allowbreak translate directly into exact graph inclusions, quotient graphs, and commutative diagrams of state transition graphs.
\end{itemize}

Several open questions and avenues for future research arise naturally from this work:
\begin{enumerate}
    \item \textbf{Coproduct Structures in $\text{pfG}$:} While the product in $\mathcal{C}_{n,\phi}$ corresponds to the graph tensor product, the categorical coproduct in $\mathcal{C}_{n,\phi}$ is given by the free product $F(\oplus_{i \in I} G_i)$, which yields an infinite state transition graph. Identifying an explicit graph-theoretic operator on functional digraphs that mimics this free product structure remains an open question.
    \item \textbf{Generalization to Higher Dimensions and Different Lattices:} The functorial construction presented here focuses on one-dimensional lattices $\mathbb{Z}_n$ with periodic boundary conditions. Extending this framework to $d$-dimensional grid topologies $\mathbb{Z}_n^d$ or general cayley graphs as lattices could reveal further connections between spatial symmetries and dynamical categories.
    \item \textbf{Categorical Invariants and Homological Applications:} Given that $F$ preserves short exact sequences, further investigation into homological invariants (e.g., homology groups of associated chain complexes on state space graphs) could yield new computational tools for analyzing fixed points, Gardens-of-Eden, and transient dynamics in complex non-abelian automata.
\end{enumerate}

\section*{Acknowledgments}
This work was funded by a CRM–ISM Undergraduate Research Scholarship.

\printbibliography

@article{wolfram1984,
  title={Algebraic properties of cellular automata},
  author={Martin, O. and Odlyzko, A. M. and Wolfram, S.},
  journal={Communications in Mathematical Physics},
  volume={93},
  pages={219--258},
  year={1984},
}

@book{hammackhandbook,
  title={Handbook of product graphs},
  author={Hammack, R. and Imrich, W. and Klav{\v{z}}ar, S.},
  year={2011},
  edition={2nd},
  publisher={CRC Press}
}

@misc{castillocatprod,
      title={Categorical products of cellular automata}, 
      author={A. Castillo-Ramirez and A. Vazquez-Aceves and A. Zaldivar-Corichi},
      year={2025},
      eprint={2503.21567},
      archivePrefix={arXiv},

}

@book{cecchSMM,
  title     = {Cellular Automata and Groups},
  author    = {Ceccherini-Silberstein, T. and Coornaert, M.},
  series    = {Springer Monographs in Mathematics},
  year      = {2024},
  publisher = {Springer},
}

@book{roylealgebraicgraph,
   title =     {Algebraic Graph Theory},
   author =    {C. Godsil, G. Royle},
   publisher = {Springer},
   year =      {2001},
   series =    {Graduate Texts in Mathematics},
}

@article{dennunziodyn,
  author       = {A. Dennunzio and
                  E. Formenti and
                  L. Margara},
  title        = {On the Dynamical Behavior of Cellular Automata on Finite Groups},
  journal      = {IEEE Access},
  volume       = {12},
  pages        = {122061--122077},
  year         = {2024},
  
}

@article{bardzellevohomomo,
  title={The evolution homomorphism and permutation actions on group generated cellular automata},
  author={Miller, N. R. and Bardzell, M. J.},
  journal={Complex Systems},
  volume={15},
  number={2},
  pages={121--136},
  year={2004}
}

\end{document}